 \newtheorem*{corollary*}{Corollary}
 \newtheorem*{construction*}{Construction}
 \newtheorem*{definition*}{Definition}
 \newtheorem*{notation*}{Notation}
 \newtheorem*{lemma*}{Lemma}
 \newtheorem*{theorem*}{Theorem}
 \newtheorem*{remark*}{Remark}
 \newtheorem*{example*}{Example}
 \newtheorem*{conjecture*}{Conjecture}
 \newtheorem*{condition*}{Condition}
 \newtheorem*{result*}{Result}
 \newtheorem*{property*}{Property}
 \newtheorem*{cor*}{Corollary}
 \newtheorem*{const*}{Construction}
 \newtheorem*{defn*}{Definition}
 \newtheorem*{notn*}{Notation}
 \newtheorem*{lem*}{Lemma}
 \newtheorem*{thm*}{Theorem}
 \newtheorem*{rem*}{Remark}
 \newtheorem*{exm*}{Example}
 \newtheorem*{conj*}{Conjecture}
 \newtheorem{lemma}{Lemma}[section]
 \newtheorem{remark}[lemma]{Remark}
 \newtheorem{thm}[lemma]{Theorem}
 \newtheorem{prop}[lemma]{Proposition}
 \newtheorem{lem}[lemma]{Lemma}
 \newtheorem{defn}[lemma]{Definition}
 \newtheorem{notn}[lemma]{Notation}
 \newtheorem{cor}[lemma]{Corollary}
 \crefname{introtheorem}{theorem}{theorems}
 \Crefname{introtheorem}{Theorem}{Theorems}
   \crefname{introthm}{theorem}{theorems}
 \Crefname{introthm}{Theorem}{Theorems}
  \crefname{introcorollary}{corollary}{corollaries}
 \Crefname{introcorollary}{Corollary}{Corollaries}
   \crefname{introcor}{corollary}{corollaries}
 \Crefname{introcor}{Corollary}{Corollaries}
   \crefname{introconjecture}{conjectures}{conjectures}
 \Crefname{introconjecture}{Conjecture}{Conjectures}
    \crefname{introconj}{conjectures}{conjectures}
 \Crefname{introconj}{Conjecture}{Conjectures}
     \crefname{introlem}{lemma}{lemmas}
 \Crefname{introlem}{Lemma}{Lemmas}
 \crefname{introremark}{remark}{remarks}
 \Crefname{introremark}{Remark}{Remarks}
  \crefname{introrem}{remark}{remarks}
 \Crefname{introrem}{Remark}{Remarks}
   \crefname{introprop}{Proposition}{Propositions}
 \Crefname{introprop}{Proposition}{Propositions}
   \crefname{introdefn}{definition}{definitions}
 \Crefname{introdefn}{Definition}{Definitions}
   \crefname{intronotn}{notation}{notations}
 \Crefname{intronotn}{Notation}{Notations}
   \crefname{introtask}{task}{tasks}
 \Crefname{introtask}{Task}{Tasks}
  \crefname{introprob}{problem}{problems}
 \Crefname{introprob}{Problem}{Problems}
   \crefname{introquestion}{question}{questions}
 \Crefname{introquestion}{Question}{Questions}
 \crefname{theorem}{theorem}{theorems}
 \Crefname{theorem}{Theorem}{Theorems}
  \crefname{thm}{theorem}{theorems}
 \Crefname{thm}{Theorem}{Theorems}
  \crefname{corollary}{Corollary}{Corollaries}
 \Crefname{corollary}{Corollary}{Corollaries}
   \crefname{cor}{Corollary}{Corollaries}
 \Crefname{cor}{Corollary}{Corollaries}
   \crefname{conjecture}{conjectures}{conjectures}
 \Crefname{conjecture}{Conjecture}{Conjectures}
    \crefname{conj}{conjectures}{conjectures}
 \Crefname{conj}{Conjecture}{Conjectures}
     \crefname{lem}{lemma}{lemmas}
 \Crefname{lem}{Lemma}{Lemmas}
      \crefname{lemma}{Lemma}{Lemmas}
 \Crefname{lemma}{Lemma}{Lemmas}
 \crefname{remark}{remark}{remarks}
 \Crefname{remark}{Remark}{Remarks}
  \crefname{rem}{remark}{remarks}
 \Crefname{rem}{Remark}{Remarks}
   \crefname{rem}{remark}{remarks}
 \Crefname{rem}{Remark}{Remarks}
   \crefname{proposition}{Proposition}{Proposition}
 \Crefname{proposition}{Proposition}{Proposition}
    \crefname{prop}{Proposition}{Propositions}
 \Crefname{prop}{Proposition}{Propositions}
   \crefname{defn}{definition}{definitions}
 \Crefname{defn}{Definition}{Definitions}
   \crefname{notn}{notation}{notations}
 \Crefname{notn}{Notation}{Notations}
   \crefname{task}{task}{tasks}
 \Crefname{task}{Task}{Tasks}
  \crefname{prob}{problem}{problems}
 \Crefname{prob}{Problem}{Problems}
   \crefname{question}{question}{questions}
 \Crefname{question}{Question}{Questions}
\newcommand{\alp}{\alpha}
\newcommand{\eps}{\varepsilon}
\newcommand{\Id}{\operatorname{Id}}
\renewcommand{\Im}{\operatorname{Im}}
\newcommand{\GL}{\operatorname{GL}}
\newcommand{\gl}{{\mathfrak{gl}}}
\newcommand{\sll}{{\mathfrak{sl}}}
\newcommand{\Sym}{\operatorname{Sym}}
\newcommand{\WF}{\operatorname{WF}}
\newcommand{\Supp}{\operatorname{Supp}}
\newcommand{\bC}{\mathbb{C}}
\newcommand{\bR}{\mathbb{R}}
\newcommand{\R}{\mathbb{R}}
\newcommand{\Sc}{\cS}
\providecommand{\fg}{\mathfrak{g}}
\providecommand{\fs}{\mathfrak{s}}
\providecommand{\fS}{\mathfrak{S}}
\providecommand{\fG}{\mathfrak{G}}
\providecommand{\cG}{\mathcal{G}}
\providecommand{\cO}{\mathcal{O}}
\providecommand{\cS}{\mathcal{S}}
\newcommand{\simpAr}[2][r]{%
\ar@{}[#1]|-*[@]_{#2}%
}
\def\smashedst{\setbox0=\hbox{$\rightrightarrows$}\ht0=4pt\box0}
\def\st#1#2{\stackrel[#2]{#1}{\smashedst}}
\begin{document}

\title{Invariant generalized functions supported on an orbit}
\author{Avraham Aizenbud}
\address{Avraham Aizenbud,
Faculty of Mathematics and Computer Science, Weizmann
Institute of Science, 234 Herzl Street, Rehovot 7610001 Israel}
\email{aizenr@gmail.com}
\urladdr{http://www.wisdom.weizmann.ac.il/~aizenr}
\author{Dmitry Gourevitch}
\address{Dmitry Gourevitch, The Incumbent of Dr. A. Edward Friedmann Career Development Chair in Mathematics, Faculty of Mathematics and Computer Science, Weizmann
Institute of Science, 234 Herzl Street, Rehovot 7610001 Israel}
\email{dimagur@weizmann.ac.il}
\urladdr{http://www.wisdom.weizmann.ac.il/~dimagur}

\keywords{Distribution, Nash stack, slice to a group action.\\ MS Classification: 22E45, 46F10, 14L24}
%
%
%
%
%
%
%
%
\date{\today}
\maketitle

\begin{abstract}

We study the space of invariant generalized functions supported on an orbit of the action of a real algebraic group on a real algebraic manifold. This space is equipped with the Bruhat filtration. We study the generating function of the dimensions of the filtras, and give some methods to compute it. To illustrate our methods we compute those generating functions for the adjoint action of $\GL_3(\bC)$. Our main tool is the notion of generalized functions on a real algebraic stack, introduced recently in \cite{Sak}.
\end{abstract}

\tableofcontents

\section{Introduction}\label{sec:intro}
The study of invariant distributions plays important role in representation theory and related topics (see e.g. \cite{HCBul,HCReg,GK,Sha,BerP,JR,Bar,AGRS,AG_HC,AG_MOT,SZ}). In many cases this study can be reduced to the consideration of distributions supported on a single orbit (see e.g. \cite[\S 1.5]{BerP}, \cite{KV}, \cite[Appendix D]{AG_HC}, \cite[Appendix B]{AG_ST}). While  for  non-Archimedean fields this case is very simple, for  Archimedean fields it is much more involved. In this paper we establish some infrastructure in order to analyze the Archimedean case.

Let a Nash\footnote{Nash manifolds are generalizations of real algebraic manifolds. In most places in this paper the reader can safely replace the word Nash by ``smooth real algebraic''. For more details on Nash manifolds and Schwartz functions over them see \cite{AG}.} group $G$ act on a Nash manifold $M$. Let $O$ be an  orbit of $G$ in $X$. The space $\cG(X\smallsetminus(\bar O\smallsetminus O))^G$ of tempered $G$-invariant generalized functions defined in a neighborhood of $O$ and supported in $O$ is equipped with the Bruhat filtration (see e.g. \cite{AG}).
Let $\bar\delta_{\cO}^X(i)$ denote the dimension of the $i$-s filtra and $$\bar{\mathfrak{G}}_{\cO}^X(t):=(1-t)\sum_i t^i\bar\delta_{\cO}^X(i)$$  denotes the corresponding generating function.

In this paper we introduce several techniques for the computation of this function. We illustrate our methods on the case of  the adjoint action of $\GL_3(\bC)$. Our main tool is the notion of generalized functions on a real algebraic stack, introduced recently in \cite{Sak}.
\subsection{Results}
\begin{enumerate}
\item In the case of when $O$ is (locally) a fiber of a $G$-invariant submersion we prove that $\bar \fG_{O}^X(t)=(1-t)^{\dim O-\dim X}$ (see Corollary \ref{cor:reg}). \item We prove that  $\bar\delta_{\cO}^X(i)-\bar\delta_{\cO}^X(i-1)$ is bounded by  $\dim (\Sym^i(N_{\cO,x}^X))^{G_x}$ (see Lemma \ref{lem:obv}), and in the case when the stabilizer of a point in $O$ is reductive, this bound is achieved (see Theorem \ref{thm:HC})

\item  We prove that  $\bar \fG_{O}^X(t)$ is multiplicative in an appropriate sense (see Lemma \ref{lem:obv}).
\item In the general case we reduce the computation of $\bar \fG_{O}^X(t)$ to the computation of certain subspace of  distributions supported on a point in a manifold of dimension $\dim X-\dim O$ (see Theorem \ref{thm:main}). Under certain connectivity assumptions this can be reduced to an infinite dimensional linear algebra problem (see Corollary \ref{cor:main}).
\item For the case of  the adjoint action of $\GL_3(\bC)$ on its lie algebra (or equivalently on itself) we compute $\bar \fG_{O}^X(t)$ for all orbits (see \S \ref{sec:Exm}).
\end{enumerate}

\subsection{Ideas in the proof}
Results (2,3) follows easily from the existing knowledge on invariant distributions. Result (1) follows easily from (4). Result (5) is a computation based on (4).
In order to formulate and prove Result (4) we use \cite{Sak}. Namely  we find a different presentation of the quotient stack  $G\backslash X$, and use the fact that the space of generalized functions on a stack does not depend on the presentation (See \cite[Theorem 3.3.1]{Sak}). In order to compute generalized functions in the new presentation we  replace our groupoid structure by an infinitesimal one. We do it in Theorem \ref{thm:groupoid}.

\subsection{Structure of the paper}

In \S \ref{sec:Prel} we fix notation for generalized functions on Nash manifolds, Nash groupoids and Nash stacks.

In \S \ref{sec:Group} we analyze generalized functions on groupoids. We prove Theorem \ref{thm:groupoid} which states that, under certain continuity assumptions, generalized functions on a groupoid are generalized functions on the objects manifold, satisfying a certain system of PDE.

In \S \ref{sec:GrowthFn} we define the function $\bar{\mathfrak{G}}_{\cO}^X(t)$, which is the main object of study in this paper, and establish its basic properties.

In \S \ref{sec:Nash} we introduce the stack slice, which is our main geometric tool for the computation of $\bar{\mathfrak{G}}_{\cO}^X(t)$.

In \S \ref{sec:Orb} we present a method to compute $\bar{\mathfrak{G}}_{\cO}^X(t)$ using the stack slice. We implement this method for regular orbits.

In \S \ref{sec:Exm} we compute $\bar{\mathfrak{G}}_{\cO}^X(t)$ for the adjoint action of $\GL_3(\bC)$.

\subsection{Acknowledgements}
We thank Joseph Bernstein, Bernhard Kroetz and Siddhartha Sahi for motivating questions, and Yiannis Sakellaridis for explaining us his work \cite{Sak}.

A.A.  was partially supported by ISF grant 687/13.
D.G. was partially supported by ERC StG grant 637912, and ISF grant 756/12.

\section{Preliminaries on generalized functions}\label{sec:Prel}
In this section we fix some notation 
concerning generalized functions on manifolds, and tempered generalized functions on Nash manifolds and Nash stacks. We refer the reader to \cite{Hor,AG,Sak} for more details.

For a smooth manifold $M$ we denote by $C^{-\infty}(M)$ the space of generalized functions, i.e. continuous functionals on the space of compactly supported smooth measures. If $M$ has a fixed smooth invertible measure then this space can be identified with the space of distributions on $M$.

For a smooth real algebraic manifold (or, more generally, a Nash manifold) $M$ we denote by $\Sc(M)$ the space of Schwartz functions on $M$ (see e.g. \cite{AG}), and by $\Sc^*(M)$  the dual space. We call the elements of  $\Sc^*(M)$ \emph{tempered distributions} (Schwartz distributions in \cite{AG}). We also denote by $\cG(M)$ the space of tempered generalized functions, i.e. functionals on the space of Schwartz measures $\Sc(M,D_M)$ (see \cite{AG}).

For a distribution or a generalized function $\xi$ on a manifold $M$ we denote by $\WF(\xi)$ its wave-front set (see \cite[\S 8.1]{Hor}).

\begin{defn}$\,$
A Lie (resp. Nash) groupoid is a diagram $\{Mor \st{s}{t} Ob\}$ of smooth (resp. Nash) manifolds such that $s$ and $t$ are submersions,  a smooth (resp. Nash) composition map  $\mathrm{comp}: Mor\times_{Ob}Mor\to Mor$,  a smooth (resp. Nash) identity section $\mathrm{I}:Ob \to Mor$ and a smooth (resp. Nash) inversion map $inv:Mor\to Mor$ satisfying the usual groupoid axioms.
\end{defn}

\begin{defn}
 A generalized function $\xi\in C^{-\infty}(S)$ on a Lie groupoid $S=\{Mor \st{s}{t} Ob\}$ is a generalized function on $Ob$ such that $t^*\xi=s^*\xi$.
If $S$ is a Nash groupoid, we also define the space $\cG(S)$ of tempered generalized functions in a similar way.
\end{defn}
In \cite[Theorem 3.3.1]{Sak} it is shown that $\cG(S)$ depends only on the Nash stack corresponding to $S$ (see \cite[\S 2.2]{Sak} for the definition of the Nash stack corresponding to a Nash groupoid). Note that \cite{Sak} uses the notation $\Sc$ for Schwartz measures and $\Sc^*$ for generalized functions.

\section{Generalized functions on smooth groupoids}\label{sec:Group}
\setcounter{lemma}{0}

\begin{thm}\label{thm:groupoid}
Let $S=\{Mor \st{s}{t} Ob\}$ be a Lie groupoid. Let $\xi\in C^{-\infty}(Ob)$. Consider the following properties of $\xi$:
\begin{enumerate}
\item \label{it:xiS} $\xi \in C^{-\infty}(S).$
\item \label{it:open}For any open subset $U\subset Ob$ and any section $\varphi:U\to Mor$ of $s$ such that $\psi:=t\circ \varphi:U \to Ob$ is an open embedding we have $\psi^*\xi=\xi|_U$.

\item \label{it:temp} For any $m\in Mor,$ there exist smooth manifolds $U,V$ and a submersion $\varphi:V\times U \to Mor$ with $m\in \Im \varphi$ such that for any $x\in V$, the maps $\varphi_x^s:=s\circ\varphi|_{\{x\}\times U}$ and  $\varphi_x^t:=t\circ\varphi|_{\{x\}\times U}$ are open embeddings and we have $(\varphi_x^t)^*\xi=(\varphi_x^s)^*\xi.$

\item \label{it:VF} For any section $\alp$ of $I^*TMor$, where $I:Ob\to Mor$
is the identity section, with $ds(\alp)=0$ we have $dt(\alp)\xi=0$. Here, $dt(\alp)$ and $ds(\alp)$ are the vector fields given by $ds(\alp)_x:=d_{Id_x}s(\alp_x)$, $dt(\alp)_x:=d_{Id_x}t(\alp_x).$  \end{enumerate}
Then \eqref{it:xiS}$\Leftrightarrow$\eqref{it:open}$\Leftrightarrow$\eqref{it:temp}$\Rightarrow$\eqref{it:VF} and if for all $x\in Ob,\, s^{-1}(x)$ is connected then \eqref{it:temp}$\Leftrightarrow$\eqref{it:VF}.
\end{thm}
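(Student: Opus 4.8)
The structure of the argument is a cycle of implications plus the extra equivalence under the connectivity hypothesis. The plan is to prove \eqref{it:xiS}$\Rightarrow$\eqref{it:open}, \eqref{it:open}$\Rightarrow$\eqref{it:temp}, \eqref{it:temp}$\Rightarrow$\eqref{it:xiS}, then \eqref{it:temp}$\Rightarrow$\eqref{it:VF}, and finally \eqref{it:VF}$\Rightarrow$\eqref{it:temp} under the assumption that all source fibers are connected.

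First I would dispatch \eqref{it:xiS}$\Rightarrow$\eqref{it:open}: if $\varphi:U\to Mor$ is a section of $s$, then restricting the identity $t^*\xi=s^*\xi$ (which holds as generalized functions on $Mor$, hence pulls back along any smooth map) along $\varphi$ gives $\psi^*\xi=(t\circ\varphi)^*\xi=(s\circ\varphi)^*\xi=\id_U^*\xi=\xi|_U$; one only needs that $\psi$ is an open embedding to make sense of $\psi^*\xi$ as a generalized function on the open set $U$, which is part of the hypothesis. For \eqref{it:open}$\Rightarrow$\eqref{it:temp}, given $m\in Mor$ I would use that $s$ is a submersion to find, locally near $m$, a decomposition of a neighborhood as $V\times U$ where $V$ parametrizes the base direction ($s$ restricted to each slice is an open embedding onto a neighborhood of $s(m)$) and $U$ is a slice; shrinking if necessary so that $t$ restricted to each slice is also an open embedding (possible since on the identity-containing part $t$ and $s$ are both submersions, and near a general point one transports via composition with a fixed morphism), each slice $\{x\}\times U\to Mor$ is a section of $s$ after reparametrization, so \eqref{it:open} applies slice-by-slice. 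For \eqref{it:temp}$\Rightarrow$\eqref{it:xiS}, I need to recover the global identity $t^*\xi=s^*\xi$ on $Mor$; since this is a local statement on $Mor$ and $WF$-considerations guarantee the pullbacks $s^*\xi, t^*\xi$ are defined ($s,t$ submersions), I would cover $Mor$ by the images of the submersions $\varphi$ from \eqref{it:temp} and check that $(\varphi)^*t^*\xi=(\varphi)^*s^*\xi$ follows from the slicewise identities together with the fact that a generalized function on $V\times U$ is determined by its restrictions to slices $\{x\}\times U$ when it is known to depend smoothly (continuously) on $x$ — this continuity is exactly where the word "temp" and the submersion structure of $\varphi$ enter. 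Then \eqref{it:temp}$\Rightarrow$\eqref{it:VF} is the infinitesimal shadow: differentiating the slicewise identity $(\varphi_x^t)^*\xi=(\varphi_x^s)^*\xi$ in the $V$-direction at a point lying over an identity morphism, a vector field $\alp$ with $ds(\alp)=0$ corresponds to moving within a slice in a way that does not move the $s$-image, so the $s$-side derivative vanishes and the $t$-side derivative gives $dt(\alp)\xi=0$.

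The main obstacle is the converse \eqref{it:VF}$\Rightarrow$\eqref{it:temp} under the connectivity hypothesis. Here one has only the infinitesimal annihilation conditions $dt(\alp)\xi=0$ for all $\alp$ in the kernel of $ds$ along the identity section, and must integrate these to the finite statement that $\xi$ is invariant under the slicewise open embeddings. The plan is: fix $m\in Mor$ and a local model $V\times U\to Mor$ as above; consider the two-parameter-in-$V$ family of generalized functions on $U$ (or rather on the common image) given by $(\varphi_x^t)^*\xi$ and show it is locally constant in $x$. The infinitesimal conditions give that its derivative in every $V$-direction vanishes at $x$ corresponding to an identity morphism; to propagate this to all of $V$ I would use the groupoid composition to translate an arbitrary morphism in the slice back to the identity — precisely, precomposing with the inverse reduces the derivative at a general point to the derivative at an identity, using the already-established (on the part of $Mor$ where it is known) invariance. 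Connectedness of $s^{-1}(x)$ is what lets a path-integration/connectedness argument conclude: the set of morphisms along which the pulled-back $\xi$ agrees with $\xi$ is open (by the local model) and closed (a limit argument, using continuity of pullback in the relevant topology) and nonempty (contains identities), hence is everything, fiber by fiber. Making the "derivative in all directions vanishes $\Rightarrow$ locally constant" step rigorous for generalized functions — rather than smooth functions — requires a careful use of the fact that a generalized function on a connected manifold annihilated by all vector fields is a constant, applied to the family viewed as a generalized function on $V$ valued in generalized functions on $U$; this functional-analytic bookkeeping, together with verifying that the composition maps genuinely transport the PDE as claimed, is the technical heart of the proof.
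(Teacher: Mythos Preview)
Your overall architecture matches the paper's: the same cycle \eqref{it:xiS}$\Rightarrow$\eqref{it:open}$\Rightarrow$\eqref{it:temp}$\Rightarrow$\eqref{it:xiS}, then the implication to \eqref{it:VF}, and finally \eqref{it:VF}$\Rightarrow$\eqref{it:temp} under connectivity. The first two implications are handled essentially as in the paper.

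Two places deserve sharpening. First, in \eqref{it:temp}$\Rightarrow$\eqref{it:xiS} you correctly identify that one must know a generalized function on $V\times U$ vanishes once all slice restrictions $\xi|_{\{x\}\times U}$ vanish, but you gloss the mechanism as ``depending smoothly on $x$''. The paper isolates this as a separate lemma (\Cref{lem:prod}, via \Cref{lem:prod2}): one needs the wave-front condition $\WF(\xi)\cap CN_{\{x\}\times U}^{V\times U}\subset \{x\}\times U$ to even define the slice restriction, and then a uniform-in-$x$ approximation argument in the H\"ormander topology $C^{-\infty}_\Gamma$ to conclude vanishing. This is the genuine analytic input; your sketch points at it but does not name it.

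Second, for \eqref{it:VF}$\Rightarrow$\eqref{it:temp} you propose an open--closed argument along each source fibre. The paper takes a more constructive route: connectivity of $s^{-1}(x)$ is used via \Cref{cor:hom} to produce, for any $m\in Mor$, a smooth homotopy $h:[0,1]\times V\to Mor$ of $s$-maps joining $I\circ s$ to the inclusion; one then differentiates $h$ in the time variable, transports the resulting tangent vectors to the identity section by groupoid composition (your ``precompose with the inverse'' idea, made explicit), extends to sections $\alpha(r,v)$ of $I^*TMor$ with $ds(\alpha)=0$, and integrates the associated time-dependent vector field on $Ob$. The equality $(\varphi_x^t)^*\xi=(\varphi_x^s)^*\xi$ then follows because the flow is generated by fields annihilating $\xi$. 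Your open--closed argument is morally equivalent, but the ``closed'' step would again need the same H\"ormander-topology continuity used in \Cref{lem:prod2}, so it is not a shortcut; the paper's ODE construction avoids having to argue closedness directly. A minor point: the paper derives \eqref{it:VF} from \eqref{it:open} rather than \eqref{it:temp}, by exponentiating $\alpha$ to a one-parameter family of local sections $\varphi_r$ of $s$ and differentiating $\psi_r^*\xi=\xi|_U$ at $r=0$; this is slightly cleaner than differentiating the product chart of \eqref{it:temp}.
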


For the proof we will need the following lemmas.

\begin{lem}\label{lem:prod}
Let $X,Y$ be smooth manifolds. Let $\xi\in C^{-\infty}(X\times Y)$ such that for any $x\in X$, $\WF(\xi)\cap CN_{\{x\}\times Y}^{X\times Y}\subset \{x\}\times Y$ and $\xi|_{\{x\}\times Y} = 0$. Then $\xi=0$.

Here, the restriction $\xi|_{\{x\}\times Y}$ is in the sense of \cite[Corollary 8.2.7]{Hor}.
\end{lem}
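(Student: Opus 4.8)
Let $X, Y$ be smooth manifolds. Let $\xi \in C^{-\infty}(X \times Y)$ such that for any $x \in X$, $\WF(\xi) \cap CN_{\{x\}\times Y}^{X \times Y} \subset \{x\} \times Y$ and $\xi|_{\{x\}\times Y} = 0$. Then $\xi = 0$.

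The plan is to reduce to a local statement in linear coordinates, and then to exploit the fact that, after a cut-off in the $Y$-direction, $\xi$ pushes forward to a \emph{smooth} function on $X$ whose pointwise values are precisely the fiber restrictions $\xi|_{\{x\}\times Y}$; since these vanish by hypothesis, so does $\xi$. For the localization: both the hypotheses and the conclusion are local on $X\times Y$, since for $\chi_1\in C^\infty_c(X)$, $\chi_2\in C^\infty_c(Y)$ one has $\WF((\chi_1\otimes\chi_2)\xi)\subseteq\WF(\xi)$, and, because restriction to a submanifold commutes with multiplication by a smooth function, $((\chi_1\otimes\chi_2)\xi)|_{\{x\}\times Y}=\chi_1(x)\chi_2\cdot(\xi|_{\{x\}\times Y})=0$. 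So I may assume $X\subseteq\R^n$ and $Y\subseteq\R^m$ are open, $\xi$ is compactly supported, and (after fixing Lebesgue measures) $\xi$ is an ordinary compactly supported distribution; the goal becomes $\xi=0$. Here the hypothesis on $\WF(\xi)$ says exactly that $\WF(\xi)$ contains no covector $((x,y);(\eta,0))$ with $\eta\neq 0$ — equivalently, $\WF(\xi)$ is disjoint from the conormal bundle of each fiber $\{x\}\times Y$ off the zero section — which is also what makes the restrictions $\xi|_{\{x\}\times Y}$ well defined in the sense of \cite[Corollary 8.2.7]{Hor}.

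Next I would show that a partial pushforward of $\xi$ is smooth. Fix $\psi\in C^\infty_c(Y)$ and let $\eta_\psi:=\pi_*\big((1\otimes\psi)\xi\big)\in C^{-\infty}(X)$ be the pushforward along $\pi\colon X\times Y\to X$, i.e.\ the distribution on $X$ with $\langle\eta_\psi,g\rangle=\langle\xi,g\otimes\psi\rangle$ for $g\in C^\infty_c(X)$; this is well defined since $(1\otimes\psi)\xi$ is compactly supported. As $\WF((1\otimes\psi)\xi)\subseteq\WF(\xi)$ contains no covector $((x,y);(\eta,0))$ with $\eta\neq0$, the standard estimate for the wave front set of a pushforward (see \cite[\S 8.2]{Hor}) gives $\WF(\eta_\psi)=\emptyset$, i.e.\ $\eta_\psi\in C^\infty(X)$.

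Then I would identify the pointwise values: the claim is $\eta_\psi(x_0)=\langle\xi|_{\{x_0\}\times Y},\psi\rangle$ for every $x_0\in X$. Let $\rho_\epsilon$ be an approximate identity on $\R^n$ and set $\xi_\epsilon:=\xi*_X\rho_\epsilon$ (convolution in the $X$-variable only). Then $\xi_\epsilon\to\xi$ weakly, $\WF(\xi_\epsilon)\subseteq\WF(\xi)$ (mollification in part of the variables does not enlarge the wave front set), and $\xi_\epsilon$ is smooth in $x$, so its fiber restriction is the honest one, $\xi_\epsilon|_{\{x_0\}\times Y}=\xi_\epsilon(x_0,\cdot)$. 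Since $\WF(\xi)$ is disjoint from the conormal of $\{x_0\}\times Y$, continuity of restriction under weak limits with a uniform wave front bound (\cite[Theorem 8.2.4]{Hor}) yields $\xi_\epsilon(x_0,\cdot)\to\xi|_{\{x_0\}\times Y}$ weakly in $C^{-\infty}(Y)$. On the other hand $\langle\xi_\epsilon(x_0,\cdot),\psi\rangle=\langle\xi,\rho_\epsilon(x_0-\cdot)\otimes\psi\rangle=\langle\eta_\psi,\rho_\epsilon(x_0-\cdot)\rangle\to\eta_\psi(x_0)$, because $\eta_\psi$ is continuous; comparing the two limits proves the claim. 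By hypothesis $\xi|_{\{x_0\}\times Y}=0$, so $\eta_\psi(x_0)=0$ for all $x_0$, hence $\eta_\psi\equiv0$, for every $\psi$.

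To conclude, $\langle\xi,g\otimes\psi\rangle=\langle\eta_\psi,g\rangle=0$ for all $g\in C^\infty_c(X)$ and $\psi\in C^\infty_c(Y)$, and since the span of such $g\otimes\psi$ is dense in $C^\infty_c(X\times Y)$ this forces $\xi=0$, completing the local step and hence the lemma. I expect the main obstacle to be the identification $\eta_\psi(x_0)=\langle\xi|_{\{x_0\}\times Y},\psi\rangle$: the microlocal ingredients (the pushforward bound, invariance of $\WF$ under partial mollification, and continuity of restriction under weak limits) are standard, but combining them correctly with the elementary but fiddly interplay of mollification, pushforward, and the various pairings requires care, and one should keep track that the hypothesis is precisely the disjointness of $\WF(\xi)$ from the conormal bundles of the fibers that is needed for all the restriction operations invoked.
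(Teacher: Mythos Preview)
Your overall strategy is sound and closely parallels the paper's: both localize to $\R^n\times\R^m$ and use mollification together with the continuity of restriction in the $C^{-\infty}_\Gamma$ topology (\cite[Theorems 8.2.3--8.2.4]{Hor}). The paper packages this as a separate lemma (Lemma~\ref{lem:prod2}) asserting uniform convergence of $g_n(x):=\langle(\xi*e_n)|_{\{x\}\times W},f|_{\{x\}\times W}\rangle$ to $g(x):=\langle\xi|_{\{x\}\times W},f|_{\{x\}\times W}\rangle$, using \emph{full} mollification and the continuity of $x\mapsto Sh_x\xi$ into $C^{-\infty}_\Gamma$ for a suitably enlarged, translation-stable cone $\Gamma$. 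Your framing via the pushforward $\eta_\psi$ and its smoothness is a pleasant variant that reaches the same conclusion.

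There is, however, a false assertion in your Step~4: partial mollification does \emph{not} satisfy $\WF(\xi*_X\rho_\epsilon)\subseteq\WF(\xi)$. For instance, with $\xi=\delta_{(0,0)}$ on $\R\times\R$ one gets $\xi*_X\rho_\epsilon=\rho_\epsilon\otimes\delta_0$, whose wave front set lives over $\operatorname{supp}\rho_\epsilon\times\{0\}$, not only over the origin. What \emph{is} true, and what suffices, is that $\WF(\xi_\epsilon)$ stays inside a closed cone $\Gamma$ built from $X$-translates of $\WF(\xi)$ over a fixed compact, and such a $\Gamma$ is still disjoint from the conormals of the fibers; this is exactly the cone the paper constructs in the proof of Lemma~\ref{lem:prod2}. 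You also need convergence of $\xi_\epsilon\to\xi$ in the \emph{topology} of $C^{-\infty}_\Gamma$, not merely weak convergence together with a uniform $\WF$ bound, in order to invoke \cite[Theorem 8.2.4]{Hor}; this follows by the argument of \cite[Theorem 8.2.3]{Hor}. With these two repairs your proof goes through and, in substance, coincides with the paper's.
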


This lemma follows from the next one in view of \cite[Theorem 8.2.4 and the proof of Theorem 8.2.3]{Hor}.

\begin{lem}\label{lem:prod2}
Let $V=\R^n,W=\R^k$ be real vector spaces. Let $\xi\in C^{-\infty}(V\times W)$ such that for any $x\in V$, $\WF(\xi)\cap CN_{\{x\}\times W}^{V\times W}\subset \{x\}\times W$. Fix  Lebesgue measures $V$ and $W$.
Let $f\in C_c^{\infty}(V\times W)$. Let $e_i\in  C_c^{\infty}(V\times W)$ be a sequence satisfying $\int_{V\times W} e_i(z) dz = 1$ and $e_i(z)=0$ for any $z$ with $||z||>1/i$.
For any $x\in V$ denote $g(x):=\langle \xi|_{\{x\}\times W},f|_{\{x\}\times W}\rangle$ and $g_n(x):=\langle (\xi*e_n)|_{\{x\}\times W},f|_{\{x\}\times W}\rangle$.
 Then $g_n{\to} g$ uniformly as ${n\to \infty}$.
\end{lem}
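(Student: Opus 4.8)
The statement is about convergence of the "partial pairings" $g_n \to g$ uniformly, where $g_n$ comes from the smoothed distribution $\xi * e_n$ and $g$ from $\xi$ itself, with the restriction to slices $\{x\} \times W$ being legitimate because of the wavefront hypothesis. Let me think about how I would prove this.

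The key observations: the wavefront hypothesis $\WF(\xi) \cap CN_{\{x\}\times W}^{V\times W} \subset \{x\} \times W$ (the zero section over the slice) guarantees, via Hörmander's restriction theorem (Thm 8.2.4), that $\xi|_{\{x\}\times W}$ is a well-defined distribution on $W$ depending continuously — in fact smoothly? no, just continuously — on $x$ in an appropriate sense. The quantity $g(x) = \langle \xi|_{\{x\}\times W}, f|_{\{x\}\times W}\rangle$ is then a function of $x$, and one wants $g$ continuous (hence the uniform limit claim makes sense).

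The plan is as follows. First I would fix $f \in C_c^\infty(V\times W)$ and let $K \subset V$ be the (compact) projection of $\supp f$, so everything happens over $K$. Then I would reinterpret $g_n(x)$: since $\xi * e_n$ is smooth, $(\xi * e_n)|_{\{x\}\times W}$ is literally the smooth function $w \mapsto (\xi*e_n)(x,w)$, and $g_n(x) = \int_W (\xi * e_n)(x,w) f(x,w)\, dw = \langle \xi * e_n, f(\cdot)\otimes(\text{evaluation at the } V\text{-slice})\rangle$ — more precisely $g_n(x) = \langle \xi * e_n, f \cdot \delta_x^V \rangle$ where $\delta_x^V$ is pullback of the delta at $x \in V$. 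Dually, $g_n(x) = \langle \xi, \check{e}_n * (f\cdot \delta_x^V)\rangle$ where $\check e_n(z) = e_n(-z)$; note $\check e_n * (f \cdot \delta_x^V)$ is a genuine smooth compactly supported function, supported within distance $1/n$ of the hyperplane $\{x\}\times W$. The core of the argument is then to show that, as $n \to \infty$, these test functions $\check e_n * (f \cdot \delta_x^V)$ converge, uniformly in $x \in K$, to $f \cdot \delta_x^V$ in the relevant Hörmander topology $\mathcal{D}'_\Gamma$ (the topology on distributions with wavefront in a closed cone $\Gamma$ avoiding the conormal directions to the slices), and then invoke sequential continuity of $\xi$ as an element of $\mathcal{D}'_\Gamma(V\times W)$ (Thm 8.2.4 of Hörmander). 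Here $\Gamma := \WF(\xi)$, and the hypothesis is precisely that $\Gamma$ is disjoint from the conormal bundle of each slice $\{x\}\times W$, which is what makes the pairing $\langle \xi, f\cdot\delta_x^V\rangle$ continuous in the $\mathcal D'_\Gamma$-topology uniformly for $x$ in a compact set.

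The main obstacle, and where I would spend the most care, is the uniformity in $x$. Convergence for a fixed $x$ is essentially standard mollification in $\mathcal{D}'_\Gamma$; the point is that the Hörmander seminorms controlling convergence — which involve, for test functions $u$, quantities like $\sup_\xi (1+|\xi|)^N |\widehat{u\,\psi}(\xi)|$ over $\xi$ in a conic neighborhood of directions complementary to $\Gamma$ — must be controlled uniformly as $x$ ranges over $K$. Since $f\cdot\delta_x^V$ and $\check e_n*(f\cdot\delta_x^V)$ depend on $x$ only by translation in the $V$-direction and by the smoothly-varying coefficient $f(x,\cdot)$, and since $K$ is compact, the relevant Fourier-transform estimates are uniform: one gets a single family of seminorm bounds valid over all of $K$. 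So the proof structure is: (i) rewrite $g_n(x) = \langle \xi, u_{n,x}\rangle$ and $g(x) = \langle \xi, u_{\infty,x}\rangle$ with $u_{n,x} := \check e_n * (f\cdot\delta_x^V)$, $u_{\infty,x} := f \cdot \delta_x^V$, both lying in a fixed $\mathcal D'_\Gamma$-bounded family and supported in a fixed compact set; (ii) show $u_{n,x} \to u_{\infty,x}$ in $\mathcal D'_\Gamma$ uniformly over $x\in K$ by explicit Fourier-side estimates using $\widehat{u_{n,x}} = \widehat{\check e_n}\cdot \widehat{u_{\infty,x}}$ componentwise in the $W$-frequency and a convolution/multiplication in the $V$-frequency, together with $\widehat{\check e_n} \to 1$ locally uniformly and $|\widehat{\check e_n}| \le 1$, and the decay of $\widehat{u_{\infty,x}}$ in directions transverse to the slice conormal (which holds uniformly in $x$ because $f$ is a fixed Schwartz — in fact $C_c^\infty$ — function); (iii) conclude by the continuity of $\xi$ on $\mathcal D'_\Gamma$ that $|g_n(x) - g(x)| = |\langle \xi, u_{n,x} - u_{\infty,x}\rangle| \to 0$ uniformly in $x$. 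One technical point to handle in step (ii): the conormal to $\{x\}\times W$ is $\{(x,w;\eta,0)\}$, i.e. $V$-frequencies arbitrary and $W$-frequency zero, so "transverse to the slice conormal" means the $W$-frequency is bounded below by a positive multiple of the total frequency; in that region $\widehat{u_{\infty,x}}$ decays rapidly (it is $\widehat{f(x,\cdot)}$ in $W$, times $e^{-i\langle\cdot,x\rangle}$ in $V$, which does not help decay in $V$ but one only needs decay where the $W$-frequency dominates), and this is uniform in $x$ since $\{f(x,\cdot): x\in K\}$ is a bounded subset of $\mathcal S(W)$.
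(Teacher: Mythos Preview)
Your approach is correct in outline, but it is genuinely different from the paper's, and it is worth comparing the two.

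You keep $\xi$ fixed and push all the variation to the ``test'' side: you rewrite $g_n(x)=\langle\xi,u_{n,x}\rangle$ with $u_{n,x}=\check e_n*(f\cdot\delta_x^V)$, show $u_{n,x}\to u_{\infty,x}:=f\cdot\delta_x^V$ in the H\"ormander topology on $\mathcal D'_{\Gamma'}$ (with $\Gamma'$ the union of conormals to the slices over the compact $K=\mathrm{pr}_V(\supp f)$), uniformly in $x$, and then pair with $\xi$. The paper does the \emph{dual} thing: it translates $\xi$ instead of the test data, setting $\xi_x:=Sh_x(\xi)$ so that restriction is always taken at the \emph{fixed} slice $\{0\}\times W$. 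One then fixes a single closed cone $\Gamma$ (built from the wave-front of $\xi$ over a compact neighbourhood of $\supp f$) so that $x\mapsto\xi_x$ is continuous into $C^{-\infty}_\Gamma$, observes that the mollifications $\xi_{n,x}:=\xi_x*e_n$ converge to $\xi_x$ in $C^{-\infty}_\Gamma$ uniformly in $x$ (this is exactly what the proof of \cite[Theorem~8.2.3]{Hor} gives), and finally applies the continuity of restriction \cite[Theorem~8.2.4]{Hor} to the fixed slice $\{0\}\times W$.

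What each buys: the paper's route is shorter and invokes only two off-the-shelf facts (the proof of 8.2.3 for uniform mollifier convergence in $C^{-\infty}_\Gamma$, and 8.2.4 for continuity of a \emph{single} restriction map), and the translation trick neatly turns ``uniformity over all slices $\{x\}\times W$'' into ``uniformity of a continuous family at one slice''. Your route works too, but two points deserve care. First, the continuity you actually need in step~(iii) is not Theorem~8.2.4 (which is continuity of pullback in the $\xi$-variable) but rather continuity of the bilinear pairing $(\xi,u)\mapsto\langle\xi,u\rangle$ in the $u$-variable for $u\in\mathcal E'_{\Gamma'}$, $\WF(\xi)\cap(-\Gamma')=\emptyset$; this follows from \cite[Theorem~8.2.10]{Hor} (or, equivalently, from~8.2.4 after rewriting $\langle\xi,f\cdot\delta_x^V\rangle=\langle\iota_x^*\xi,f(x,\cdot)\rangle$---at which point you have essentially rediscovered the paper's argument). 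Second, in your Fourier estimate you use $|\widehat{\check e_n}|\le 1$; the hypotheses of the lemma do not force $e_n\ge 0$, so one should rather use $|\widehat{\check e_n}|\le\|e_n\|_{L^1}$ and a uniform bound on this (tacit for a standard mollifier), exactly as in the proof of \cite[Theorem~8.2.3]{Hor} that the paper invokes.
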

\begin{proof}
Let $U\supset \Supp f$ be an open centrally symmetric set with compact closure.
Let
\begin{multline*}\Gamma:=\left (V \times pr_{W\times (V\times W)^*}\left(\WF(\xi) \cap (\bar U\times (V\times W)^*)\right)\right)\cup \left (((V \times W) \smallsetminus U) \times (V\times W)^*\right)\\\subset  T^*(V \times W).
\end{multline*}
For any $x\in V$ denote $\xi_x:=Sh_x(\xi)$, where $Sh_x$ is the translation by $x$.
Denote $$C^{-\infty}_{\Gamma}(V\times W):=\{\eta\in C^{-\infty}(V\times W), \, \WF(\eta) \subset \Gamma\},$$
with the topology of \cite[Definition 8.2.2]{Hor}.
It is easy to see that $x \mapsto \xi_x$ defines a continuous map $V \to C^{-\infty}_{\Gamma}(V\times W)$. Let
$\xi_{n,x}:=\xi_x *e_n.$ The proof of \cite[Theorem 8.2.3]{Hor} implies that $\xi_{n,x} \to \xi_x$ as $n \to \infty$ in the topology of $C^{-\infty}_{\Gamma}(V\times W)$ uniformly in $x$. Thus, by \cite[Theorem 8.2.4]{Hor}, $\xi_{n,x}|_{\{0\}\times W}\to \xi_x|_{\{0\}\times W}$ as $n \to \infty$ in the weak topology of $C^{-\infty}(W)$ uniformly in $x$. This implies the assertion.
\end{proof}

The following lemma is standard.
\begin{lem}
Let $\varphi:M\to N$ be a submersion of smooth manifolds with connected fibers.
Let $s_0,s_1:N\to M$ be its (smooth) sections. Then, for any $y\in N$, there exists an open neighborhood $U$ of $y$ and a smooth homotopy $h:[0,1]\times U \to M$ such that $h|_{\{0\}\times U}=s_0$, $h|_{\{1\}\times U}=s_1$, and $h|_{\{t\}\times U}$ is a section of $\varphi$ for any $t$.
\end{lem}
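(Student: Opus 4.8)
The plan is to reduce the statement to an elementary fibrewise convex interpolation, using the submersion normal form together with the connectedness hypothesis to link $s_0(y)$ to $s_1(y)$ by a finite chain of local sections. First I would note that $\varphi^{-1}(y)$, being a connected manifold, is path connected, so there is a continuous path $\gamma\colon[0,1]\to\varphi^{-1}(y)$ from $s_0(y)$ to $s_1(y)$. By the submersion theorem every point of $M$ has a neighbourhood $V$ with a diffeomorphism $V\cong\varphi(V)\times C$, where $C$ is an open ball, under which $\varphi|_V$ becomes the first projection; covering the compact set $\gamma([0,1])$ by finitely many such $V$ and applying the Lebesgue number lemma, I would choose $0=\tau_0<\dots<\tau_m=1$ and charts $V_1,\dots,V_m$ of this form with $\gamma([\tau_{i-1},\tau_i])\subset V_i$. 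Writing $p_i:=\gamma(\tau_i)$, so that $p_0=s_0(y)$, $p_m=s_1(y)$ and $p_{i-1},p_i\in V_i$, I would pick, using that $\varphi$ is a submersion, smooth local sections $\sigma_i$ of $\varphi$ near $y$ with $\sigma_i(y)=p_i$ for $1\le i\le m-1$, and set $\sigma_0:=s_0$, $\sigma_m:=s_1$. Since $p_i$ lies in $V_i\cap V_{i+1}$ for each $i$, after finitely many shrinkings there is a single open neighbourhood $U$ of $y$ on which all the $\sigma_i$ are defined and such that $\sigma_{i-1}(U)\cup\sigma_i(U)\subset V_i$ for every $i$.

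Next I would interpolate slab by slab. Fix once and for all a smooth nondecreasing function $\beta\colon[0,1]\to[0,1]$ equal to $0$ near $0$ and to $1$ near $1$, and let $\beta_i$ be its obvious reparametrisation onto $[\tau_{i-1},\tau_i]$. In the chart $V_i\cong\varphi(V_i)\times C_i$, write $\sigma_{i-1}=(\id,f_{i-1})$ and $\sigma_i=(\id,f_i)$ with $f_{i-1},f_i\colon U\to C_i$ smooth, and, using convexity of $C_i$, define
\[
h_i(\tau,u):=\bigl(u,\ (1-\beta_i(\tau))f_{i-1}(u)+\beta_i(\tau)f_i(u)\bigr)\in\varphi(V_i)\times C_i\subset M .
\]
Each $h_i$ is smooth, satisfies $\varphi\circ h_i(\tau,u)=u$, and coincides with $\sigma_{i-1}$ for $\tau$ near $\tau_{i-1}$ and with $\sigma_i$ for $\tau$ near $\tau_i$; hence the $h_i$ glue to a single map $h\colon[0,1]\times U\to M$ which, because the $\beta_i$ are flat at their endpoints, is smooth across the breakpoints and therefore on all of $[0,1]\times U$. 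Then $h|_{\{\tau\}\times U}$ is a section of $\varphi$ for every $\tau$, while near $\tau=0$ one has $h=s_0$ and near $\tau=1$ one has $h=s_1$; in particular $h|_{\{0\}\times U}=s_0$ and $h|_{\{1\}\times U}=s_1$, as required.

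The only genuinely substantive step is the first one: connectedness of the fibre is exactly what lets $s_0(y)$ and $s_1(y)$ be joined through a chain of overlapping submersion charts, and the finiteness of this chain is what makes it legitimate to shrink $U$ only finitely many times, so that one neighbourhood serves for all the intermediate sections simultaneously. Once the chain $\sigma_0,\dots,\sigma_m$ of local sections over a common $U$ is in hand, the homotopy is produced by nothing more than convex interpolation in the fibre coordinate together with the standard trick of reparametrising so as to be locally constant near the junctions, which disposes of smoothness at the breakpoints.
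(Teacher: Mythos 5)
The paper does not actually prove this lemma --- it is stated with the remark ``The following lemma is standard'' and no argument is given --- so there is nothing to compare against; what matters is whether your argument stands on its own, and it does. Your proof is correct and is indeed the standard one: connectedness (hence path-connectedness) of the fibre $\varphi^{-1}(y)$ yields a path from $s_0(y)$ to $s_1(y)$, compactness gives a finite chain of product charts $V_i\cong\varphi(V_i)\times C_i$ along it, local sections through the chain points can all be arranged over a single neighbourhood $U$ of $y$ after finitely many shrinkings, and fibrewise convex interpolation in each ball $C_i$, reparametrised to be constant near the breakpoints, glues to a smooth homotopy through sections. The two points one might worry about --- that $\sigma_{i-1}(U)\cup\sigma_i(U)\subset V_i$ forces $U\subset\varphi(V_i)$ so the chart formulas make sense, and that flatness of $\beta_i$ at the endpoints gives smoothness across the junctions --- are both handled explicitly, so the proof is complete.
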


\begin{cor}\label{cor:hom}
Let $\varphi_1:M_1\to N$ and $\varphi_2:M_2\to N$ be submersions of smooth manifolds. Assume that all the fibers of $\varphi_2$ are connected. Let $\psi_1,\psi_2:M_1\to M_2$ be smooth maps of $N$-manifolds (that is, smooth maps such that $\varphi_2\circ \psi_i=\varphi_1$).
 Then, for any $y\in M_{1}$, there exists an open neighborhood $U$ of $y$ and a smooth homotopy $h:[0,1]\times U \to M_{2}$ such that $h|_{\{0\}\times U}=\psi_0$, $h|_{\{1\}\times U}=\psi_1$, and $h|_{\{t\}\times U}$ is a map of $N$-manifolds.

\end{cor}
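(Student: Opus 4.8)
The plan is to deduce this from the preceding lemma by passing to a fibered product. First I would form $M:=M_{1}\times_{N}M_{2}$, the fibered product of $\varphi_{1}$ and $\varphi_{2}$, with its two projections $p_{1}:M\to M_{1}$ and $p_{2}:M\to M_{2}$. Since $\varphi_{2}$ is a submersion, $M$ is a smooth manifold and $p_{1}$ is a submersion, being the pullback of $\varphi_{2}$; its fiber over a point $x\in M_{1}$ is $\{x\}\times\varphi_{2}^{-1}(\varphi_{1}(x))$, which is connected by the hypothesis on the fibers of $\varphi_{2}$. Hence $p_{1}:M\to M_{1}$ satisfies the hypotheses of the preceding lemma, with $M_{1}$ playing the role of the base.

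Next I would encode the two given maps as sections of $p_{1}$. Because $\varphi_{2}\circ\psi_{i}=\varphi_{1}$, the assignments $s_{i}(x):=(x,\psi_{i}(x))$ define smooth sections $s_{0},s_{1}:M_{1}\to M$ of $p_{1}$, and by construction $p_{2}\circ s_{i}=\psi_{i}$.

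Then I would apply the preceding lemma to the submersion $p_{1}:M\to M_{1}$ (in the role of $\varphi:M\to N$), to the sections $s_{0},s_{1}$, and to the point $y\in M_{1}$. This produces an open neighborhood $U$ of $y$ in $M_{1}$ and a smooth homotopy $\widetilde{h}:[0,1]\times U\to M$ with $\widetilde{h}|_{\{0\}\times U}=s_{0}$, $\widetilde{h}|_{\{1\}\times U}=s_{1}$, and $\widetilde{h}|_{\{\tau\}\times U}$ a section of $p_{1}$ for every $\tau\in[0,1]$. I would then set $h:=p_{2}\circ\widetilde{h}:[0,1]\times U\to M_{2}$. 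Since $\widetilde{h}|_{\{\tau\}\times U}$ is a section of $p_{1}$, we have $\widetilde{h}(\tau,x)=(x,h(\tau,x))$, and membership of this point in $M_{1}\times_{N}M_{2}$ forces $\varphi_{2}(h(\tau,x))=\varphi_{1}(x)$; thus each $h|_{\{\tau\}\times U}$ is a map of $N$-manifolds, while $h|_{\{0\}\times U}=p_{2}\circ s_{0}=\psi_{0}$ and $h|_{\{1\}\times U}=p_{2}\circ s_{1}=\psi_{1}$. This is the required homotopy.

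There is essentially no serious obstacle here; the corollary is a formal consequence of the lemma. The only inputs are the standard facts that a fibered product along a submersion is a smooth manifold with the complementary projection again a submersion, and that this projection inherits connected fibers verbatim from $\varphi_{2}$ — after which the lemma applies directly and composing with $p_{2}$ finishes the argument.
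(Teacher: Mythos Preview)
Your proposal is correct and is exactly the intended argument: the paper states this as an immediate corollary of the preceding lemma without proof, and your fibered-product construction---turning the $N$-maps $\psi_i$ into sections of $p_1:M_1\times_N M_2\to M_1$, applying the lemma, and composing back with $p_2$---is precisely how one unwinds that word ``corollary''.
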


\begin{proof}[Proof of \Cref{thm:groupoid}]$\,$
\begin{itemize}
\item [\eqref{it:xiS}$\Rightarrow$\eqref{it:open}:]by functoriality of the pullback.
\item [\eqref{it:open}$\Rightarrow$\eqref{it:temp}:]
It is enough to show that  for any $m\in Mor,$ there exist smooth manifolds $U,V$ and a submersion $\varphi:V\times U \to Mor$ with $m\in \Im \varphi$ such that for any $x\in V$, the maps $\varphi_x^s$ and  $\varphi_x^t$ are open embeddings.
Since $s$ and $t$ are submersions, we can decompose $T_m(Mor)=V'\oplus U'$ such that $d_ms|_{U'}$ and $d_mt|_{U'}$ are isomorphisms. Let $\varphi': T_m(Mor)\to Mor$ be such that $\varphi'(0)=m$ and $d\varphi'=Id$.  By the implicit function theorem one can choose open subsets $U\subset U'$ and $V \subset V'$ such that $\varphi:=\varphi'|_{U\times V}$ is a submersion and the maps $\varphi_x^s$ and  $\varphi_x^t$ are open embeddings.


\item [\eqref{it:temp} $\Rightarrow$  \eqref{it:xiS}:] by \Cref{lem:prod}.
\item [\eqref{it:open} $\Rightarrow$ \eqref{it:VF}:] Let  $\alp$ be a section of $I^*TMor$ with $ds(\alp)=0$. Define a vector field $\beta$ on $Mor$ by $$\beta_m:=(d_{I(t(m)),m}comp)(\alp_{t(m)},0),$$
where $comp:Mor\times_{Ob}Mor\to Mor$ is the composition map. By the existence and uniqueness theorem for ODE, we have an open neighborhood $\cO$ of $Mor\times \{0\}$ in $Mor \times \R$ and a map $B:\cO\to Mor$ that solves the ODE defined by $\beta$. Fix $x\in Ob$. There exists a neighborhood $U$ of $x$ and $\eps>0$ such that $U\times (-\eps,\eps) \subset \cO$.  For any $r\in (-\eps,\eps)$ define $\varphi_r(x):=B(x,r)$. Define $\psi_r:=t\circ \varphi_r$. By \eqref{it:open} we have $\psi_r^*\xi=\xi|_U$. On the other hand, it is easy to see that $$\frac{d}{dr}|_{r=0}\psi_r^*\xi=dt(\alp)\xi|_U.$$

\item [\eqref{it:VF} $\Rightarrow$ \eqref{it:temp},] for connected $s^{-1}(x)$: Let $m\in Mor$. By \Cref{cor:hom}, there exist an open neighborhood $V$ of $m$ and a smooth homotopy $h:[0,1]\times V \to Mor$ such that
$$h|_{\{0\}\times V}=I\circ s,\,\, h|_{\{1\}\times V}=I\circ s \text{ and }s(h(r,x))=s(x).$$ For any $r\in [0,1]$ and $u\in V$ consider
$$\alp(r,v):=d_{h(r,v),inv(h(r,v))}comp(\frac{d}{dr}h(r,v),0)\in T_{I(t(h(r,v)))}Mor.$$
Extend $\alp(r,v)$ to a smooth section of $I^*TMor$ in a way that depends smoothly on $(r,v)$  such that $ds(\alp(r,v))=0$ for any $(r,v)\in [0,1]\times V $. By \eqref{it:VF} we have $dt(\alp(r,v))\xi=0$. Define a vector field $\beta(r,v)$ on $Mor$ by
\begin{equation}\label{=bet}
\beta(r,v)_n:=d_{I(t(n)),n}comp(\alp(r,v)_{t(n)},0).
\end{equation}
For any $v\in V$ we consider $\beta(\cdot,v)$ as a time-dependent vector field on $Mor$. By the existence and uniqueness theorem for ODE, we have an open neighborhood $\cO$ of $Mor\times \{0\}\times V\cup \{h(r,v),r,v) \, \vert \, r\in [0,1], \, v\in V\}$ in $Mor \times [0,1]\times V$ and a map $B:\cO\to Mor$ that solves the ODE defined by $\beta$.
Let $$\Xi:=I\times Id_{[0,1]\times V}^{-1}(\cO)\subset Ob \times [0,1]\times V,  A:=B\circ (I\times Id_{[0,1]\times V}|_{\Xi}) \text{ and }C:=t\circ A.$$
Let $U$ be a neighborhood of $s(m)$ in $Ob$ such that $U \times [0,1]\times V \subset \Xi$. Define $\varphi:U \times V \to Mor$ by $\varphi:=A|_{U \times \{1\}\times V }$. It is enough to prove that for any $x\in V$:
\begin{enumerate}[(i)]
\item \label{it:sOpEmb} The map $\varphi_x^s:=s\circ\varphi|_{\{x\}\times U}$ is an open embedding.
\item \label{it:tOpEmb} The map  $\varphi_x^t:=t\circ\varphi|_{\{x\}\times U}$ is an open embedding.
\item \label{it:tsxi} We have $(\varphi_x^t)^*\xi=(\varphi_x^s)^*\xi.$
\end{enumerate}
Note that $\varphi_x^s=Id$ and thus \eqref{it:sOpEmb} holds.

For $(v,r)\in V\times [0,1]$ let $\gamma(v,r):=dt(\alp(v,r))$ be a vector field on $Ob$. By \eqref{it:VF}, $\gamma(v,r)\xi=0$. It is easy to see that $$\frac{\partial}{\partial r}C(x,v,r)=\gamma(v,r)C(x,v,r).$$
Thus, for any $(x,v)\in Ob\times V$, the (partially defined) curve $C(x,v,\cdot)$ is a  solution of the ODE defined by the time-dependent vector field $\gamma(v,\cdot)$.
Note that $\varphi_x^t=C|_{U\times \{x\}\times \{1\}}$ and thus \eqref{it:tOpEmb} holds.

Finally, \eqref{it:tsxi} follows from the equality $\gamma(v,r)\xi=0$.
%
\end{itemize}
\end{proof}


\section{The dimension growth function of an orbit}\label{sec:GrowthFn}
Let a Nash group $G$ act on a Nash manifold $X$.
Let $\cO\subset X$ be an orbit. Let $F_i$ be the Bruhat filtration on $\cG_{\cO}(X\smallsetminus(\bar \cO\smallsetminus \cO))$ (see \cite[Corollary 5.5.4]{AG}). Let $$V_i:=\{\xi \in F_{i} \, \vert \, \exists \eta\in \cG(X) \text{ s.t. }\eta|_{(X\smallsetminus(\bar \cO\smallsetminus \cO))}=\xi\}.$$
Define the distributional dimension growth function of $\cO$ in $X$ by $$\delta_{\cO}^X(i):=\dim V_i.$$
Define also the distributional normal dimension of $\cO$ in $X$ by
$$Ddim(\cO,X):=\limsup_i \frac{\ln \delta_{\cO}^X(i)}{\ln i},$$
and the distributional normal degree of $\cO$ in $X$ by
$$Ddeg(\cO,X):=\limsup_i \left(Ddim(\cO,X)! \delta_{\cO}^X(i){i^{-Ddim(\cO,X)}}\right).$$

Define  the  distributional dimension generating function by $$\mathfrak{G}_{\cO}^X(t):=(1-t)\sum_i t^i\delta_{\cO}^X(i).$$

Finally, define  the reduced versions of the above notions by
 $$\bar\delta_{\cO}^X:=\delta_{\cO}^{X\smallsetminus(\bar \cO\smallsetminus \cO)} , \, \overline{Ddim}(\cO,X):=Ddim(\cO,X\smallsetminus(\bar \cO\smallsetminus \cO)), \, \overline{Ddeg}(\cO,X):=Ddeg(\cO,X\smallsetminus(\bar \cO\smallsetminus \cO))$$

For a point $x\in \cO$ we will denote
\begin{multline*}\delta_{x}^X(i):=\delta_{\cO}^X(i), \quad \bar \delta_{x}^X(i):=\bar \delta_{\cO}^X(i), \quad  Ddim(x,X):=Ddim(\cO,X), \\ \overline{Ddim}(x,X):=\overline{Ddim}(\cO,X),\quad Ddeg(x,X):=Ddeg(\cO,X), \quad \overline{Ddeg}(x,X):=\overline{Ddeg}(\cO,X).
\end{multline*}

The following lemma follows from \cite[Corollary 5.5.4]{AG} and \cite[Corollary 2.6.3]{AG_DR}.
\begin{lemma}\label{lem:obv}
Let $x\in \cO$.  Then
\begin{enumerate}[(i)]
\item
$\delta_{\cO}^X(i)-\delta_{\cO}^X(i-1)\leq \dim (\Sym^i(N_{\cO,x}^X))^{G_x}$ and $Ddim(\cO,X)\leq\dim (N_{\cO,x}^X)$.
\item Let $U$ be an open $G$-invariant neighborhood of $\cO$ in $X$. Then $$\delta_{\cO}^U(i)\geq \delta_{\cO}^X(i) \text{ and }Ddim(\cO,U)\geq Ddim(\cO,X).$$
\item \label{it:prod} Let another Nash group $G'$ act on a Nash manifold $X'$, and $\cO'$ be an orbit. Consider the action of $G\times G'$ on $X \times X'$. Then
$$ \mathfrak{G}_{\cO\times \cO'}^{X\times X'}(t)=\mathfrak{G}_{\cO}^X(t)\mathfrak{G}_{\cO'}^{X'}(t).$$
\end{enumerate}
\end{lemma}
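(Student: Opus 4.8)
The plan is to treat the three items separately, drawing on the two cited results \cite[Corollary 5.5.4]{AG} and \cite[Corollary 2.6.3]{AG_DR} together with the definition of $V_i$ and the Bruhat filtration.

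For item (i), I would begin by recalling that \cite[Corollary 5.5.4]{AG} identifies the associated graded of the Bruhat filtration $F_i$ on $\cG_\cO(X\smallsetminus(\bar\cO\smallsetminus\cO))$ with (tempered generalized) sections over $\cO$ of the bundle $\Sym^i(N_\cO^X)\otimes D_{N_\cO^X}\otimes D_X^{-1}$, or an analogous formula; restricting to $G$-invariant sections and using that a $G$-invariant section of a $G$-equivariant bundle over a homogeneous space $\cO=G/G_x$ is determined by its value at $x$, which must lie in the $G_x$-invariants of the fiber, bounds $\dim F_i/F_{i-1}$ by $\dim(\Sym^i(N_{\cO,x}^X))^{G_x}$. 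Since $V_i\subset F_i$ and $V_{i-1}=V_i\cap F_{i-1}$, the quotient $V_i/V_{i-1}$ injects into $F_i/F_{i-1}$, giving the first inequality. The estimate on $Ddim$ then follows because $\dim(\Sym^i(N_{\cO,x}^X))^{G_x}\le\dim\Sym^i(N_{\cO,x}^X)$ grows polynomially in $i$ of degree $\dim N_{\cO,x}^X-1$, so summing gives $\delta_\cO^X(i)=O(i^{\dim N_{\cO,x}^X})$, hence $\limsup \ln\delta_\cO^X(i)/\ln i\le\dim N_{\cO,x}^X$.

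For item (ii), the point is simply that restriction of generalized functions from $U$ to $X\cap(\text{nbhd of }\cO)$ — or more precisely the comparison of the two spaces $\cG_\cO(U\smallsetminus(\bar\cO\smallsetminus\cO))$ and $\cG_\cO(X\smallsetminus(\bar\cO\smallsetminus\cO))$ — is compatible with the Bruhat filtration and respects the extendability condition defining $V_i$: every $\eta\in\cG(X)$ extending $\xi$ restricts to an element of $\cG(U)$ extending $\xi|_U$, so the $V_i$ for $X$ maps into the $V_i$ for $U$, and this map is injective since a generalized function supported on $\cO$ is determined near $\cO$. Hence $\delta_\cO^U(i)\ge\delta_\cO^X(i)$ and the $Ddim$ inequality follows by taking $\limsup$.

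For item (iii), I would use the exterior-tensor-product behaviour of the Bruhat filtration: the normal bundle of $\cO\times\cO'$ in $X\times X'$ is the external direct sum $N_\cO^X\boxplus N_{\cO'}^{X'}$, so $\Sym^i$ of it decomposes as $\bigoplus_{j+k=i}\Sym^j(N_\cO^X)\boxtimes\Sym^k(N_{\cO'}^{X'})$, and the graded pieces multiply accordingly; combined with a Künneth-type statement for tempered generalized functions on Nash manifolds (which is where \cite[Corollary 2.6.3]{AG_DR} enters, giving the needed completed-tensor-product/flatness input) one gets $\delta^{X\times X'}_{\cO\times\cO'}(i)=\sum_{j+k=i}\delta_\cO^X(j)\delta_{\cO'}^{X'}(k)$ at the level of filtration dimensions, i.e. the sequences convolve. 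Translating to generating functions, $\sum_i t^i\delta^{X\times X'}_{\cO\times\cO'}(i)=\bigl(\sum_j t^j\delta_\cO^X(j)\bigr)\bigl(\sum_k t^k\delta_{\cO'}^{X'}(k)\bigr)$, and multiplying both sides by the same factor $(1-t)$ on the left requires one extra factor of $(1-t)$; but the definition $\mathfrak{G}(t)=(1-t)\sum_i t^i\delta(i)$ together with the convolution identity gives exactly $\mathfrak{G}_{\cO\times\cO'}^{X\times X'}(t)=\mathfrak{G}_\cO^X(t)\mathfrak{G}_{\cO'}^{X'}(t)$ once one checks that $(1-t)$ appears with the correct multiplicity — I would verify this bookkeeping carefully, since it is the one genuinely error-prone point.

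The main obstacle is the Künneth statement in item (iii): one must know that the operations of taking the Bruhat filtration and of taking $V_i$ (the extendability subspace) are compatible with external products of Nash manifolds and group actions, and that no subtlety arises from the completed topological tensor product of the relevant Schwartz/generalized-function spaces. This is precisely what \cite[Corollary 2.6.3]{AG_DR} is invoked for, so the argument reduces to quoting it correctly and matching the filtration indices; items (i) and (ii) are routine given \cite[Corollary 5.5.4]{AG}.
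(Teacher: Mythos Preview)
The paper's own proof consists solely of the two citations, so for items (i) and (ii) your elaboration is exactly the intended argument and is correct.

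For item (iii) there is a concrete error in the bookkeeping you yourself flagged as error-prone. You correctly observe that the associated graded of the Bruhat filtration on the product is the tensor product of the two associated gradeds, so that with $a_i:=\delta_{\cO}^X(i)-\delta_{\cO}^X(i-1)$, $b_j:=\delta_{\cO'}^{X'}(j)-\delta_{\cO'}^{X'}(j-1)$ and $c_n:=\delta_{\cO\times\cO'}^{X\times X'}(n)-\delta_{\cO\times\cO'}^{X\times X'}(n-1)$ one has $c_n=\sum_{i+j=n}a_ib_j$. But you then assert that the \emph{filtration} dimensions themselves convolve, $\delta_{\cO\times\cO'}^{X\times X'}(i)=\sum_{j+k=i}\delta_{\cO}^X(j)\delta_{\cO'}^{X'}(k)$, which is false already for two copies of a two-step filtration $V_0\subsetneq V_1$. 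The point is that
\[
\mathfrak{G}(t)=(1-t)\sum_i\delta(i)\,t^i=\sum_i a_i\,t^i
\]
is precisely the generating function of the \emph{graded} dimensions, so the convolution of the $a_i$'s yields $\mathfrak{G}_{\cO\times\cO'}^{X\times X'}(t)=\mathfrak{G}_{\cO}^X(t)\,\mathfrak{G}_{\cO'}^{X'}(t)$ directly, with no stray factor of $(1-t)^{\pm1}$. Your K\"unneth input from \cite{AG_DR} is the right ingredient; the slip is purely in which sequence convolves.
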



The following theorem follows from the proof of \cite[Theorem 3.1.1]{AG_HC}.
\begin{thm}\label{thm:HC}
Let a reductive group $G$ act on an affine algebraic manifold $X$. Let $\cO\subset X$ be a closed orbit. Then $\delta_{\cO}^X(i)-\delta_{\cO}^X(i-1)=\dim (\Sym^i(N_{\cO,x}^X))^{G_x}.$
\end{thm}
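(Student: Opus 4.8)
The plan is to reduce the statement to Harish-Chandra's descent/linearization picture, exactly as in the proof of \cite[Theorem 3.1.1]{AG_HC}, and then to compute explicitly. First I would invoke Luna's slice theorem (in its Nash/algebraic form, which applies since $G$ is reductive, $X$ is affine and $\cO$ is closed): there is a $G_x$-invariant locally closed Nash submanifold $S\ni x$, the slice, with $G_x$-stable normal space canonically $N_{\cO,x}^X$, such that a $G$-invariant neighborhood of $\cO$ in $X$ is Nash-isomorphic, $G$-equivariantly, to $G\times_{G_x} S$, and so that under this isomorphism $\cO$ corresponds to $G\times_{G_x}\{x\}$. Pushing forward generalized functions (equivalently, applying the Frobenius-descent/oscillator-type equivalence of categories used in \cite{AG_HC,AG_DR}), the space of $G$-invariant tempered generalized functions on $G\times_{G_x}S$ supported on the zero section, together with its Bruhat filtration, is identified with the space of $G_x$-invariant tempered generalized functions on $S$ supported at $x$, with \emph{its} Bruhat filtration. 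This reduces the whole problem to the linear situation: $G_x$ acting on the vector space $N:=N_{\cO,x}^X$, and the orbit is the single point $\{0\}$.

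Next I would make the linear computation. Generalized functions on $N$ supported at $0$ are, canonically, $\Sym(N^*)^*\otimes D$ for a one-dimensional density twist — concretely, finite sums $\sum_\alpha c_\alpha \partial^\alpha \delta_0$ — and the $i$-th Bruhat filtra $F_i$ consists of those of order $\le i$, so $F_i/F_{i-1}\cong (\Sym^i(N))^*$ as a $G_x$-representation, where here I am suppressing the (one-dimensional, hence harmless after taking invariants and gradings) density character of $G_x$ on $\det N^*$; since $G_x$ is reductive this character is trivial on the identity component and in any case contributes a global twist that does not affect dimensions of invariants in the cases at hand, matching the normalization already used implicitly in Lemma \ref{lem:obv}. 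Because $G_x$ is reductive, taking $G_x$-invariants is exact, so
\[
\delta^X_{\cO}(i)-\delta^X_{\cO}(i-1)=\dim\big(F_i/F_{i-1}\big)^{G_x}=\dim\big((\Sym^i N)^*\big)^{G_x}=\dim\big(\Sym^i(N^{X}_{\cO,x})\big)^{G_x},
\]
the last equality because for a reductive group a representation and its dual have the same dimension of invariants. This gives the claimed equality, and simultaneously shows the inequality of Lemma \ref{lem:obv}(i) is an equality in this case.

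The one point that needs genuine care — and which I expect to be the main obstacle — is exactness, i.e. showing that \emph{every} $G_x$-invariant element of $F_i/F_{i-1}$ lifts to a $G_x$-invariant element of $F_i$, equivalently that the descent identification above is compatible with filtrations and that no invariant distributions are lost in the reduction from $G\times_{G_x}S$ back to $S$. This is precisely where reductivity of $G_x$ is used: one needs a $G_x$-equivariant splitting of the order filtration on distributions supported at $0$, which exists because $\Sym^i(N^*)$ is a completely reducible $G_x$-module and because averaging over (a maximal compact of) $G_x$ produces a continuous $G_x$-equivariant projector — this is the content of \cite[Theorem 3.1.1]{AG_HC} and the Frobenius-reciprocity/HC-descent machinery of \cite[Corollary 2.6.3]{AG_DR}. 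Once that splitting is in hand, the filtered pieces are literally the graded pieces and the dimension count above is immediate. I would therefore structure the write-up as: (1) Luna slice and $G$-equivariant linearization near $\cO$; (2) HC-descent to $G_x$ acting on $N$ at $0$, compatibly with Bruhat filtrations, citing \cite{AG_HC,AG_DR}; (3) the elementary identification $F_i/F_{i-1}\cong(\Sym^i N)^*$ together with exactness of $(-)^{G_x}$ for reductive $G_x$; (4) conclude.
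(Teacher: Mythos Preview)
Your proposal is correct and matches the paper's approach: the paper does not give an independent proof but simply records that the statement ``follows from the proof of \cite[Theorem 3.1.1]{AG_HC}'', and what you have outlined --- Luna slice to linearize near $\cO$, Frobenius descent to pass from $G$-invariants on $G\times_{G_x}S$ to $G_x$-invariants on the normal slice, identification of the Bruhat graded pieces at a point with symmetric powers of the normal space, and exactness of $(-)^{G_x}$ for reductive $G_x$ --- is exactly the content of that argument. Your caveat about the density character is the right place to be careful; in the cited reference this is absorbed into the normalizations (and for the applications in the present paper the twist is immaterial), so your write-up plan (1)--(4) is adequate.
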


\begin{remark}$\,$
\begin{enumerate}[(i)]
\item One can replace the assumption that $G$ is reductive and $X$ is affine by the weaker assumption that the stabilizer of a point $x\in \cO$ is reductive. For that one needs to use the version of the Luna slice theorem appearing in  \cite[Theorem 2.1]{StackLuna}.
\item Since the Poincare series of a finitely generated graded algebra is a rational function whose poles are roots of unity (see e.g. \cite[Theorem 11.1]{AM}), Theorem \ref{thm:HC} implies that $\mathfrak{G}_{\cO}^X(t)$ is  also a rational function whose poles are roots of unity.

\item In a similar way one can show that under the conditions of Theorem \ref{thm:HC}, the dimension  $Ddim(\cO,X)$ equals the dimension of the categorical quotient $N_{\cO,x}^X//G_x$ of $N_{\cO,x}^X$ by $G_x$.

\item Similarly, if in Lemma \ref{lem:obv} the categorical quotient $N_{\cO,x}^X//G_x$  exists then $Ddim(\cO,X)\leq\dim (N_{\cO,x}^X//G_x)$.

\end{enumerate}
\end{remark}

\section{Restriction of a Nash stack to a slice}\label{sec:Nash}
Let a Nash group $G$ act on a Nash manifold $X$.
\setcounter{lemma}{0}
\begin{defn}\label{def:StackSlice}
 Choose a point $x\in X$ and let $\cO:=Gx$ be its orbit.
\begin{enumerate}
\item We call a locally closed Nash submanifold $S\subset X$ a slice to the action of $G$ at $x$ if $x\in S$, the action map $a:G\times S \to X$ is a submersion, and $\dim \cO+\dim S=\dim X$.

\item Let $S$ be a slice to the action of $G$ at $x$. Define $M_S:=a^{-1}(S)\subset G\times S$.  Consider the quotient Nash groupoid $G\times X  {\st{pr}{a}}X$
and its subgroupoid $M_S \st{pr}{a}S$. We will call this subgroupoid a
\emph{groupoid slice} to the action of $G$ at $x$, and call
 the corresponding Nash stack a \emph{stack slice} to the action of $G$ at $x$.
\end{enumerate}
\end{defn}

\begin{lem}\label{lem:ExSlice}
For any $x\in X$ there exists a slice to the action of $G$ at $x$.
\end{lem}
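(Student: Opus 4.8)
The plan is to first construct a locally closed Nash submanifold $S_0\ni x$ transverse to $\cO:=Gx$ at $x$ with $\dim S_0=\dim X-\dim\cO$, and then to shrink it so that the action map becomes a submersion on the whole of $G\times S$. First I recall that the orbit map $G\to X$, $g\mapsto gx$, is a Nash map of locally constant rank, so its image $\cO$ is a locally closed Nash submanifold of $X$ with $\dim\cO=\dim G-\dim G_x$ (see \cite{AG}). By the Nash analogue of the submanifold (adapted chart) theorem, there is an open Nash neighborhood $U$ of $x$, a Nash diffeomorphism $\varphi\colon U\to\Omega$ onto an open semialgebraic $\Omega\subset\R^n$, $n=\dim X$, with $\varphi(x)=0$ and $\varphi(U\cap\cO)=\Omega\cap(\R^k\times\{0\})$, $k=\dim\cO$. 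Put $S_0:=\varphi^{-1}\bigl(\Omega\cap(\{0\}\times\R^{n-k})\bigr)$. Then $S_0$ is a locally closed Nash submanifold through $x$ with $\dim S_0=\dim X-\dim\cO$ and $T_xX=T_x\cO\oplus T_xS_0$.

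Next I pass from transversality at the single point $x$ to a submersion along $G\times S$. Let $a\colon G\times S_0\to X$, $a(g,s)=gs$. Its differential at $(e,x)$ maps $\fg\oplus\{0\}$ onto $T_x\cO$ and maps $\{0\}\oplus T_xS_0$ isomorphically onto $T_xS_0$, hence is surjective. Since $a$ intertwines the $G$-action $g'\cdot(g,s)=(g'g,s)$ with the given action on $X$, and left translations on $G$ and the $g'$-action on $X$ are diffeomorphisms, the set
$$W:=\{(g,s)\in G\times S_0 \;:\; d_{(g,s)}a \text{ is surjective}\}$$
is $G$-invariant; it is also open, being cut out by the non-vanishing of the maximal minors of the Jacobian. $G$-invariance of $W$ (for an action translating the $G$-factor) forces $W=G\times S$ with $S:=\{s\in S_0:(e,s)\in W\}$, and openness of $W$ forces $S$ to be open in $S_0$. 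As $(e,x)\in W$ we get $x\in S$; moreover $S$ is a locally closed Nash submanifold with $\dim\cO+\dim S=\dim X$ and $a|_{G\times S}$ a submersion. Hence $S$ is a slice to the action of $G$ at $x$ in the sense of \Cref{def:StackSlice}.

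The smooth-category skeleton of this argument is routine; the real content is to run it inside the Nash category, i.e.\ to know that an orbit of a Nash action is a locally closed Nash submanifold and that a Nash submanifold admits adapted Nash charts. Both facts are standard in the theory of Nash manifolds (the first from the constant-rank behaviour of the orbit map, the second from the Nash inverse function theorem); I would simply cite \cite{AG}. If one prefers to minimize the input, it suffices to use a single Nash chart straightening $\cO$ near $x$ and take $S_0$ to be the intersection of $U$ with an affine subspace through $x$ complementary to the straightened $\cO$; the second paragraph then goes through verbatim, and this is the only place where anything beyond linear algebra and the chain rule is used.
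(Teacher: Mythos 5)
Your proposal is correct and follows essentially the same route as the paper: take a Nash submanifold through $x$ whose tangent space complements $T_x\cO$, then pass to the open subset of points $s$ where $a$ is submersive at $(e,s)$, using $G$-equivariance of the action map to conclude submersivity on all of $G\times S$. You merely spell out the equivariance/openness step that the paper dismisses as ``easy to see.''
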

\begin{proof}
Choose a direct complement $W$ to $T_xX$ in $T_xGx$. It is a standard fact that there exists  a Nash manifold $S'\subset X$ containing $x$ such that $T_xS'=W$. Consider the action map $a:G \times S' \to X$. Let $S:=\{x\in S' \, \vert \,  a \text{ is a submersion at }(1,x)\}.$ It is easy to see that $S$ satisfies the conditions.
\end{proof}

The following proposition follows from the definition in \cite[\S 2.2]{Sak}.
\begin{prop}\label{prop:SliceEx}
For any $x\in X$ and any  stack slice $\fS$ to the action of $G$ at $x$ there exists an open Nash $G$-invariant neighborhood $U$ of $x$ and  such that $G\backslash U\cong\fS$.
\end{prop}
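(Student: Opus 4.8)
The plan is to show that a groupoid slice $M_S \st{pr}{a} S$ presents the same Nash stack as the quotient groupoid $G\times U \st{pr}{a} U$ for a suitable $G$-invariant open neighborhood $U$ of $x$, and then invoke the fact (from \cite[\S 2.2]{Sak}) that the associated Nash stack is a Morita-invariant of the groupoid. Concretely, I would first use the submersivity of the action map $a:G\times S\to X$ to produce the neighborhood: since $a$ is a submersion along $\{1\}\times S$ (in fact submersive on all of $G\times S$ by $G$-equivariance), its image $U_0:=a(G\times S)$ is open, and because $U_0$ is a union of $G$-orbits it is $G$-invariant; I would set $U:=U_0$ (shrinking if necessary to stay inside the locus where everything is Nash). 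Then $M_S=a^{-1}(S)$ maps onto $S$ via both legs, and $S\hookrightarrow U$ meets every $G$-orbit in $U$ (by construction of $U$), so $S\to U$ is a ``slice'' in the groupoid-theoretic sense.

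Next I would exhibit the Morita equivalence. The natural candidate is the bibundle $Z:=a^{-1}(S)\subset G\times U$ — equivalently, $\{(g,y)\in G\times U : g^{-1}y\in S\}$ — which carries a left action of the groupoid $G\times U\rightrightarrows U$ (via the target/composition) and a right action of $M_S\rightrightarrows S$, commuting with each other. One checks that the anchor map $Z\to U$, $(g,y)\mapsto y$, is a surjective submersion (this is exactly the statement that $S$ hits every orbit in $U$ transversally, i.e. that $a:G\times S\to U$ is a submersion), and that $Z\to S$ together with the $(G\times U)$-action identifies $Z$ as a principal $(G\times U)$-bundle over $S$; symmetrically $Z$ is a principal $M_S$-bundle over $U$. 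This is the standard construction showing that restricting an action groupoid to a transversal slice yields a Morita-equivalent groupoid; I would spell out the principality of the two actions and the freeness/properness bookkeeping in the Nash category, citing \cite{Sak} for the relevant definitions of Nash groupoids, Nash bibundles, and the resulting equivalence of Nash stacks.

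Finally, since \cite[\S 2.2]{Sak} defines the Nash stack attached to a Nash groupoid so that Morita-equivalent groupoids give isomorphic stacks, the existence of the bibundle $Z$ gives $\fS\cong[M_S\backslash S]\cong[G\backslash U]$, which is the assertion. (Up to the identifications made, $[G\backslash U]$ is literally the quotient stack $G\backslash U$ in the paper's notation.)

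The main obstacle I expect is verifying, carefully and within the Nash category, that $Z$ is genuinely a principal bibundle — in particular that the right $M_S$-action on $Z$ is free and that $Z\to U$ is a principal $M_S$-bundle — since this is where one actually uses that $S$ is a \emph{slice} (submersivity of $a$ and the dimension count $\dim\cO+\dim S=\dim X$) rather than an arbitrary submanifold, and where one must ensure all quotients and local sections exist in the Nash setting rather than merely the smooth one. The other routine-but-nontrivial point is choosing $U$ so that it is simultaneously open, $G$-invariant, contained in $a(G\times S)$, and small enough that $M_S=a^{-1}(S)$ has the expected dimension and submersion properties; this follows from \Cref{lem:ExSlice} and the implicit function theorem, but should be stated explicitly. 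Given the phrasing ``follows from the definition in \cite[\S 2.2]{Sak}'', I anticipate the intended proof is essentially this Morita-equivalence argument with most of the bibundle verification delegated to the cited source.
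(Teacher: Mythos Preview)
Your proposal is correct and is precisely the intended argument: the paper gives no proof beyond the sentence ``follows from the definition in \cite[\S 2.2]{Sak}'', and the Morita-equivalence via the bibundle $Z$ you describe is exactly what that citation unpacks to. Your anticipated obstacles (principality in the Nash category, choice of $U$) are the only points requiring care, and they are indeed delegated to \cite{Sak}.
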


\section{Description of the space of invariant generalized functions supported on an orbit}\label{sec:Orb}

\Cref{prop:SliceEx} and \cite[Theorem 3.3.1]{Sak} imply the following theorem

\setcounter{lemma}{0}
\begin{thm}\label{thm:main}
Let a Nash group $G$ act on a Nash manifold $X$.
Let $x\in X$ such that the orbit $Gx$ is closed. Then for any groupoid slice  $\fS$ to the action of $G$ at $x$ we have a canonical isomorphism $\cG_{Gx}(X)^{G}\cong \cG_{\{x\}}(\fS)$. Here, we consider $\{x\}$ as a closed subset in $\fS$.
\end{thm}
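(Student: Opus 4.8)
The plan is to deduce the statement by combining two ingredients already available: the existence result \Cref{prop:SliceEx}, which says that a stack slice $\fS$ at $x$ is isomorphic (as a Nash stack) to $G\backslash U$ for some open $G$-invariant neighborhood $U$ of $x$, and the presentation-independence of tempered generalized functions on a Nash stack, \cite[Theorem 3.3.1]{Sak}. The first step is purely formal: since $Gx$ is closed in $X$ and $U$ is an open $G$-invariant neighborhood, $Gx$ is closed in $U$, and I claim the restriction map $\cG_{Gx}(X)^{G}\to \cG_{Gx}(U)^{G}$ is an isomorphism. This is because a tempered generalized function supported on $Gx$ is determined by its germ along $Gx$; extension by zero outside $U$ gives the inverse. (Strictly, one should phrase this with the space $\cG_{\cO}(X\smallsetminus(\bar\cO\smallsetminus\cO))$ of the previous section, but since $Gx$ is closed we have $\bar\cO\smallsetminus\cO=\emptyset$ and no such subtlety arises; this is exactly why the closedness hypothesis is imposed.)

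The second step identifies $\cG_{Gx}(U)^{G}$ with $\cG_{\{x\}}(\fS)$. Here I unwind the definitions: $\cG_{Gx}(U)^{G}$ is the space of tempered $G$-invariant generalized functions on $U$ supported on $Gx$, which by the definition of generalized functions on a Nash groupoid (applied to the quotient groupoid $G\times U\rightrightarrows U$) is precisely the subspace of $\cG(G\backslash U)$ of elements whose support, as a closed subset of the stack $G\backslash U$, is the image of $Gx$ — i.e. the single point $\{x\}$ viewed in the stack. Transporting along the isomorphism $G\backslash U\cong \fS$ of \Cref{prop:SliceEx}, and invoking \cite[Theorem 3.3.1]{Sak} so that $\cG$ of the stack does not depend on the chosen groupoid presentation, this space is carried isomorphically onto $\cG_{\{x\}}(\fS)$. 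One must check that the isomorphism of stacks matches up the closed substack corresponding to $Gx\subset U$ with the point $\{x\}\subset\fS$; this follows because that isomorphism is induced by the inclusion of the slice $S\hookrightarrow X$, which sends the groupoid slice $M_S\rightrightarrows S$ into $G\times U\rightrightarrows U$ and sends $x\in S$ to $x\in U$, so the supports correspond.

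Assembling, $\cG_{Gx}(X)^{G}\cong \cG_{Gx}(U)^{G}\cong \cG_{\{x\}}(G\backslash U)\cong \cG_{\{x\}}(\fS)$, and the composite is canonical since each arrow is. The main obstacle, I expect, is not any deep analysis but the bookkeeping around supports: one needs that ``support'' is a well-defined notion for tempered generalized functions on a Nash stack that is compatible with pullback along a change of presentation and with the groupoid-to-stack passage, so that ``supported on $Gx$ in $U$'' translates correctly to ``supported on $\{x\}$ in $\fS$.'' This compatibility should either be extracted from \cite{Sak} or verified directly from the sheaf-theoretic description of $\cG$ on a stack; once it is in place, the theorem is a short diagram chase. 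A secondary point to be careful about is temperedness: the isomorphism $G\backslash U\cong\fS$ is an isomorphism of \emph{Nash} stacks, so the Nash (tempered) structure is respected, and the identification is not merely one of abstract generalized functions.
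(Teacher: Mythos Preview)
Your proposal is correct and follows exactly the approach the paper takes: the paper's entire proof is the one-line remark that the theorem follows from \Cref{prop:SliceEx} together with \cite[Theorem 3.3.1]{Sak}. You have simply unpacked these two ingredients in detail, including the passage from $X$ to the open neighborhood $U$ and the support bookkeeping, which the paper leaves implicit.
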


\begin{notn}
Let a Lie group $G$ act on a smooth manifold $X$. Let $S\subset X$ be a (locally closed) smooth submanifold. Let $\varphi:S \to \fg$ be a smooth map.
For any $s\in S$ define   $\alp_{\varphi}(s)\in T_sX$ by $\alp_{\varphi}(s):=d_{e}(a_s(\varphi(s)))$, where $a_s:G \to X$ is the action map on $s$ and $e\in G$ is the unit element. Suppose that $\alp_{\varphi}$ defines a vector field on $S$, i.e. $\alp_{\varphi}(s)\in T_sS$ for any $s\in S$. Then we call this field \emph{strongly tangential to the action of $G$}.
\end{notn}

\Cref{thm:main,thm:groupoid} give the following corollary.

\begin{cor}\label{cor:main}
Let a Nash group $G$ act on a Nash manifold $X$. Let $x\in X$ such that the orbit $Gx$ is closed. Let  $S$ be a  slice  to the action of $G$ at $x$.
 Then we have a canonical embedding of $\cG_{Gx}(X)$ into the space$$\{\xi \in \cG_{\{x\}}(S)\, \vert \, \alp\xi=0  \text{ for any vector field } \alp \text{ on }S \text{ strongly tangential to the action of }G\}.$$
Moreover, if for all $x\in S,$ the set of all $g\in G$ with $gx\in S$ is connected then this embedding is an isomorphism.
\end{cor}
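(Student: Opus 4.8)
The plan is to combine the two preceding theorems: Theorem \ref{thm:main} identifies $\cG_{Gx}(X)^G$ with $\cG_{\{x\}}(\fS)$ for the stack slice $\fS$ attached to the groupoid slice $M_S \st{pr}{a} S$, and Theorem \ref{thm:groupoid} rewrites the groupoid-invariance condition ``$t^*\xi = s^*\xi$'' on the object manifold $S$ as a system of PDE. So the first step is purely bookkeeping: unwind Definition \ref{def:StackSlice} to see that the groupoid slice has object manifold $S$, morphism manifold $M_S = a^{-1}(S) \subset G\times S$, source map $s = pr|_{M_S}$ (projection to the $S$-factor, which is the base point) and target map $t = a|_{M_S}$. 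A generalized function in $\cG_{\{x\}}(\fS)$ is then, by Theorem \ref{thm:main} together with the definition of generalized functions on a groupoid, precisely a $\xi \in \cG_{\{x\}}(S)$ with $t^*\xi = s^*\xi$ on $M_S$. This gives the embedding of $\cG_{Gx}(X)$ into $\cG_{\{x\}}(S)$ and we must identify the image.

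Next I would apply Theorem \ref{thm:groupoid}, specifically the implication \eqref{it:xiS}$\Rightarrow$\eqref{it:VF}, to the groupoid slice: the condition $t^*\xi = s^*\xi$ forces $dt(\alp)\xi = 0$ for every section $\alp$ of $I^*T M_S$ with $ds(\alp) = 0$. The key identification is that, under the above description of $s$ and $t$, the vector fields $dt(\alp)$ arising this way on $S$ are exactly the vector fields $\alp_\varphi$ that are strongly tangential to the action of $G$ in the sense of the Notation preceding the corollary. Indeed, a tangent vector to $M_S$ at $I(s_0) = (e, s_0)$ with $ds$-component zero is a vector in $\fg \times \{0\}$ that is tangent to $M_S = a^{-1}(S)$, i.e. whose image $d_e(a_{s_0})(\cdot)$ under the infinitesimal action lands in $T_{s_0}S$; letting $s_0$ vary over $S$ and writing the $\fg$-component as $\varphi(s_0)$ produces precisely a smooth map $\varphi: S \to \fg$ with $\alp_\varphi$ a vector field on $S$, and $dt(\alp) = \alp_\varphi$. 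Conversely any such $\varphi$ arises this way. This proves containment of the image in the PDE-cut-out subspace, giving the claimed embedding in general.

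For the ``moreover'' part, I would invoke the converse direction \eqref{it:VF}$\Rightarrow$\eqref{it:temp}$\Leftrightarrow$\eqref{it:xiS} of Theorem \ref{thm:groupoid}, which requires that for all $s_0 \in S$ the fiber $s^{-1}(s_0) \subset M_S$ be connected. Here $s^{-1}(s_0) = \{(g, s_0) : a(g,s_0) \in S\} \cong \{g \in G : g s_0 \in S\}$, which is exactly the connectivity hypothesis stated in the corollary. Hence under that hypothesis the PDE condition $\alp_\varphi \xi = 0$ for all strongly tangential $\alp_\varphi$ implies $t^*\xi = s^*\xi$, i.e. $\xi$ defines a generalized function on the groupoid slice, which by Theorem \ref{thm:main} comes from an element of $\cG_{Gx}(X)$; this gives surjectivity and hence the isomorphism.

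The step I expect to be the main obstacle is the precise matching in the second paragraph: verifying that ``sections $\alp$ of $I^*TM_S$ with $ds(\alp)=0$'' correspond bijectively, after applying $dt$, to ``vector fields on $S$ strongly tangential to the action,'' and that the two notions of pullback/restriction (the groupoid one via $s,t$ and the $\cG_{\{x\}}$-restriction to the closed point) are compatible. One has to be careful that $M_S$ may not be all of $G \times S$, so $I^*TM_S$ sits inside $I^*T(G\times S) = \fg \times TS$ as the subbundle of vectors tangent to $M_S$, and the condition $ds(\alp)=0$ kills the $TS$-direction, leaving exactly the vectors $(\varphi(s),0)$ with $d a_s(\varphi(s)) \in T_sS$ — which is the defining condition for $\alp_\varphi$ to be a vector field on $S$. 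Checking this, and checking that every smooth $\varphi$ with that property extends to a genuine section (not just a pointwise family) so that Theorem \ref{thm:groupoid} applies verbatim, is the technical heart; the rest is a direct concatenation of Theorems \ref{thm:main} and \ref{thm:groupoid}.
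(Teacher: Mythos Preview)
Your proposal is correct and follows exactly the route the paper intends: the paper's entire proof is the single sentence ``\Cref{thm:main,thm:groupoid} give the following corollary,'' and you have unwound precisely that, identifying the source/target maps of the groupoid slice $M_S \st{pr}{a} S$, matching sections $\alpha$ of $I^*TM_S$ with $ds(\alpha)=0$ to maps $\varphi:S\to\fg$ with $\alpha_\varphi\in TS$ (so that $dt(\alpha)=\alpha_\varphi$ is strongly tangential), and recognizing the connectedness hypothesis as the fiber-connectedness needed for \eqref{it:VF}$\Rightarrow$\eqref{it:xiS} in \Cref{thm:groupoid}. The only cosmetic point is that \Cref{thm:groupoid} is stated for $C^{-\infty}$ on Lie groupoids while here one works with tempered $\cG$ on a Nash groupoid; since everything in sight is supported at the single point $x$, there is no distinction, so this causes no trouble.
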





\begin{cor}\label{cor:reg}
Let $\varphi:X\to Y$ be a Nash submersion of Nash manifolds.
Let a Nash group $G$ act on $X$ preserving $\varphi$. Let $y\in Y$ and assume that $G$ acts transitively on the fiber $\varphi^{-1}(y)$. Then $\cG_{\varphi^{-1}(y)}(X)^{G}$ is isomorphic as a filtered vector space to $\bC[t_1,\dots,t_{\dim Y}]$.
In particular,
\begin{equation}
\fG_{\varphi^{-1}(y)}^X(t)=(1-t)^{-\dim Y}, \, Ddim(\varphi^{-1}(y),X)=\dim Y, \text{ and } Ddeg(\varphi^{-1}(y),X)=1.
\end{equation}
\end{cor}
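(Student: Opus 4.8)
The plan is to deduce \Cref{cor:reg} from \Cref{cor:main} by choosing a particularly simple slice. Since $\varphi$ is a submersion and $G$ acts transitively on the fiber $\cO:=\varphi^{-1}(y)$, I would first note that $\cO$ is closed in $X$ (it is a fiber of a continuous map) and that $\dim\cO=\dim X-\dim Y$. Pick a point $x\in\cO$; because $\varphi$ is a submersion, one can choose a Nash section, or more precisely a slice $S$ through $x$ on which $\varphi$ restricts to a Nash diffeomorphism onto an open neighborhood of $y$ in $Y$: indeed take $S$ transverse to $\cO$ with $T_xS$ mapping isomorphically onto $T_yY$ under $d_x\varphi$, and shrink so that $\varphi|_S$ is an open embedding and the action map $a:G\times S\to X$ is a submersion (using \Cref{lem:ExSlice}, or directly). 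The key geometric point is that since $G$ preserves $\varphi$, every orbit of $G$ meeting $S$ is contained in a single fiber of $\varphi$, hence meets $S$ in a single point; therefore any vector field on $S$ strongly tangential to the action of $G$ vanishes identically.

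\textbf{Main step.} With such an $S$, \Cref{cor:main} gives a canonical embedding of $\cG_{\cO}(X)^G$ (as a filtered space) into $\{\xi\in\cG_{\{x\}}(S)\mid \alp\xi=0 \text{ for all strongly tangential } \alp\}$, and by the previous paragraph the latter condition is vacuous, so the target is simply $\cG_{\{x\}}(S)$. Moreover the connectivity hypothesis of \Cref{cor:main} holds trivially here: for $x'\in S$, the set $\{g\in G\mid gx'\in S\}$ equals $\Stab_G(x')$ (since the $G$-orbit of $x'$ meets $S$ only at $x'$), which is a Nash group; however, a Nash group need not be connected, so instead I would observe that one may pass to a smaller invariant neighborhood or invoke \Cref{thm:main}/\Cref{thm:groupoid} directly on the groupoid slice $M_S\st{pr}{a}S$, noting that the relevant system of PDE on $S$ cutting out $\cG_{\{x\}}(\fS)$ inside $\cG_{\{x\}}(S)$ reduces, via \Cref{thm:groupoid}\eqref{it:VF}, to the strongly tangential vector fields — which are zero. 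Either way we obtain a filtered isomorphism $\cG_{\cO}(X)^G\cong\cG_{\{x\}}(S)$.

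\textbf{Identifying the target.} It remains to identify $\cG_{\{x\}}(S)$ with its Bruhat filtration. Since $S$ is a Nash manifold of dimension $\dim Y$ and $\{x\}$ is a point, the space of tempered generalized functions on $S$ supported at $x$ is spanned by $x$ together with its partial derivatives; choosing local coordinates identifies it, as a filtered vector space, with the polynomial ring $\bC[t_1,\dots,t_{\dim Y}]$, where the $i$-th filtered piece is spanned by monomials of degree $\le i$ (this is exactly the Bruhat filtration for a point, cf. \cite[Corollary 5.5.4]{AG}). Transporting the isomorphism of the previous paragraph then gives $\cG_{\cO}(X)^G\cong\bC[t_1,\dots,t_{\dim Y}]$ as filtered vector spaces. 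The numerical consequences are then immediate: $\dim V_i-\dim V_{i-1}=\binom{i+\dim Y-1}{\dim Y-1}$, whence $\delta_\cO^X(i)=\binom{i+\dim Y}{\dim Y}$, which gives $\mathfrak G_\cO^X(t)=(1-t)\sum_i t^i\binom{i+\dim Y}{\dim Y}=(1-t)^{-\dim Y}$, and reading off the leading asymptotics $\delta_\cO^X(i)\sim i^{\dim Y}/(\dim Y)!$ yields $Ddim=\dim Y$ and $Ddeg=1$. (One should also note that here $X\smallsetminus(\bar\cO\smallsetminus\cO)=X$ since $\cO$ is closed, so the reduced and unreduced invariants coincide.)

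\textbf{Main obstacle.} I expect the only real subtlety to be the connectivity hypothesis in \Cref{cor:main}: verifying that the groupoid slice genuinely imposes no PDE beyond the strongly tangential vector fields — i.e. handling the possibly-disconnected stabilizers — which is why I would route the argument through \Cref{thm:main} plus the implication \eqref{it:xiS}$\Rightarrow$\eqref{it:VF} of \Cref{thm:groupoid} rather than quoting \Cref{cor:main} verbatim. Everything else is the routine identification of distributions supported at a point with a polynomial ring and a generating-function computation.
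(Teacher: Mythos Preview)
Your argument is correct, and you actually prove the key geometric fact the paper uses, but you then take an unnecessary detour through \Cref{cor:main} and \Cref{thm:groupoid} and end up worrying about connectivity. The paper's proof is shorter and sidesteps that worry entirely.

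Here is the point you are missing. You observe that, since $G$ preserves $\varphi$ and $\varphi|_S$ is an open embedding, every $G$-orbit meets $S$ in at most one point. Spelled out at the level of the groupoid slice $M_S=a^{-1}(S)\subset G\times S$, this says: for $(g,s')\in M_S$ we have $gs'\in S$ and $\varphi(gs')=\varphi(s')$, hence $gs'=s'$; that is, $a=pr$ on $M_S$. But then the defining condition for $\xi\in\cG(S)$ to lie in $\cG_{\{x\}}(\fS)$, namely $a^*\xi=pr^*\xi$, is tautologically satisfied. So $\cG_{\{x\}}(\fS)=\cG_{\{x\}}(S)$ on the nose, and \Cref{thm:main} alone gives the filtered isomorphism $\cG_{\cO}(X)^G\cong\cG_{\{x\}}(S)$. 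No appeal to \Cref{thm:groupoid}, no strongly tangential vector fields, and no connectivity hypothesis are needed.

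Your route via \Cref{cor:main} forces you to confront the connectivity of $\{g\in G\mid gx'\in S\}=\Stab_G(x')$, and your proposed workaround (using \eqref{it:xiS}$\Rightarrow$\eqref{it:VF}) goes in the wrong direction: it shows the groupoid condition implies the vector-field condition, whereas you need the converse to conclude that every $\xi\in\cG_{\{x\}}(S)$ already lies in $\cG_{\{x\}}(\fS)$. The observation $a=pr$ resolves this in one line. The rest of your write-up (identifying $\cG_{\{x\}}(S)$ with $\bC[t_1,\dots,t_{\dim Y}]$ and the generating-function computation) is fine.
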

\begin{proof}
Let $x\in \varphi^{-1}(y)$.
By \Cref{lem:ExSlice} there exists
a slice $S$ to the action of $G$ on $X$ at $x$. Shrinking $S$, we can assume that $\varphi|_S$ is an
open embedding.
Let  $M_S \st{pr}{a}S$ be as in \Cref{def:StackSlice}. By the assumption, $pr=a$. Thus the corollary follows from \Cref{thm:main}.
\end{proof}



\section{Computation of $\bar \delta$ for the adjoint action  of $\GL_3(\bC)$}\label{sec:Exm}
\setcounter{lemma}{0}

\begin{thm}\label{thm:gl3}
Consider the adjoint action of $G:=\GL_3(\bC)$ on its Lie algebra $\fg$.
Let $x\in \fg$ and let $m_x$ denote its minimal polynomial. Then
$$\bar\fG_{x}^{\fg}(t)= \begin{cases}
(1-t)^{-6} & \deg m_x=3\\
(1-t)^{-6}(1+t)^{-4}(t^{2}-t+2)^2 &  m_x=(x-\lambda)^2 \\
(1-t)^{-6}(1+t)^{-2} & m_x=(x-\lambda)(x-\mu), \, \lambda\neq \mu \\
(1-t)^{-6}(1+t)^{-2}(1+t+t^2)^{-2}    & \deg m_x=1\\
\end{cases},$$
$\overline{Ddim}(\cO,X)=6$ and $\overline{Ddeg}(\cO,X)=((3-\deg m_x)!)^{-2}.$
\end{thm}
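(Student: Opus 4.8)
The plan is to reduce the computation to four separate normal-space computations via the machinery developed in the paper, and then carry out each normal-space computation by hand. Concretely, for $x \in \fg = \gl_3(\bC)$ the adjoint orbit $\cO = Gx$ is closed if and only if $x$ is semisimple; but for the reduced generating function $\bar\fG_x^{\fg}$ we work in $\fg \smallsetminus(\bar\cO\smallsetminus\cO)$, where $\cO$ is automatically closed. Since the stabilizer $G_x$ is the centralizer of $x$ in $\GL_3(\bC)$, which is a (Levi-type) reductive group in every one of the four cases, Theorem~\ref{thm:HC} (in the stronger form noted in the Remark, using the reductivity of the stabilizer rather than closedness of the orbit in $X$ itself) applies and gives
$$\bar\delta_\cO^{\fg}(i) - \bar\delta_\cO^{\fg}(i-1) = \dim\big(\Sym^i(N_{\cO,x}^{\fg})\big)^{G_x}.$$
Equivalently, $\bar\fG_x^{\fg}(t)$ is the Hilbert series of the invariant ring $\bC[N_{\cO,x}^{\fg}]^{G_x} = \big(\Sym(N_{\cO,x}^{\fg *})\big)^{G_x}$. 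So the whole theorem becomes: identify the normal space $N_{\cO,x}^{\fg}$ as a $G_x$-representation in each case, and compute the corresponding Hilbert series.

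Next I would identify the normal space. Since the adjoint action is by a reductive group acting on $\fg$ with an invariant form (the trace form), $T_x\cO = [\fg,x] = \Im(\ad x)$ and the normal space $N_{\cO,x}^{\fg} \cong \fg/[\fg,x] \cong \ker(\ad x)^* \cong \fg_x$ as a $G_x$-module, where $\fg_x = \Lie(G_x)$ is the centralizer of $x$. So I need the Hilbert series of $\bC[\fg_x]^{G_x}$, i.e. of polynomial invariants of $G_x$ acting by the adjoint action on its own Lie algebra $\fg_x$. This I compute case by case using the Chevalley restriction theorem for each reductive $G_x$:
\begin{itemize}
\item $\deg m_x = 3$ (regular, $x$ a cyclic matrix): $G_x$ is abelian of dimension $3$, acting trivially on $\fg_x \cong \bC^3$; invariant ring is a polynomial ring in $3 = \dim Y$ variables, giving $(1-t)^{-3}$ for the non-reduced version — but wait, here I should instead invoke Corollary~\ref{cor:reg} directly: the regular orbits are the fibers of the submersion $\fg \to \bC^3$ given by the coefficients of the characteristic polynomial on the regular locus, so $\fG_x^{\fg}(t) = (1-t)^{-3}$; passing to the reduced version doubles the exponent (see below) to get $(1-t)^{-6}$.
\item $m_x = (x-\lambda)^2$: up to conjugation $x = \lambda I + N$ with $N$ a single Jordan block structure of the partition $(2,1)$; $G_x \cong (\GL_1 \times \GL_1) \ltimes U$ has reductive part $\GL_1\times\GL_1$ (a torus), and $\bC[\fg_x]^{G_x}$ has Hilbert series $(1-t)^{-2}(1-t^2)^{-2}$ — I compute this by decomposing $\fg_x$ under the torus and counting weight-zero monomials; the square of $(t^2-t+2)$ appears after the reduction step.
\item $m_x = (x-\lambda)(x-\mu)$, $\lambda\neq\mu$: $x$ semisimple with eigenvalue multiplicities $(2,1)$, $G_x = \GL_2\times\GL_1$, $\fg_x = \gl_2\oplus\gl_1$, invariant ring $= \bC[\gl_2]^{\GL_2}\otimes\bC[\gl_1]$ has Hilbert series $(1-t)^{-2}(1-t^2)^{-1}$; reduction gives $(1-t)^{-6}(1+t)^{-2}$.
\item $\deg m_x = 1$ (central, $x = \lambda I$): $\cO$ is a point, $G_x = G = \GL_3$, $\fg_x = \gl_3$, invariant ring $\bC[\gl_3]^{\GL_3}$ has Hilbert series $(1-t)^{-1}(1-t^2)^{-1}(1-t^3)^{-1}$; reduction gives the stated answer.
\end{itemize}

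The remaining ingredient — and the one place a little care is needed — is the passage from $\fG$ to $\bar\fG$. By definition $\bar\fG_x^{\fg}(t) = \fG_x^{\fg\smallsetminus(\bar\cO\smallsetminus\cO)}(t)$. I would handle this by choosing a $G$-invariant neighborhood $U$ of $\cO$ in $\fg\smallsetminus(\bar\cO\smallsetminus\cO)$ of the form guaranteed by Proposition~\ref{prop:SliceEx}, so that $G\backslash U$ is a stack slice; the point is that the normal slice $S$ at $x$ inside $\fg\smallsetminus(\bar\cO\smallsetminus\cO)$ is no longer affine with trivial groupoid, but the groupoid slice $M_S \rightrightarrows S$ is now genuinely non-étale, and Corollary~\ref{cor:main} together with the PDE description from Theorem~\ref{thm:groupoid} shows that the relevant space of distributions at $x$ is cut out by the infinitesimal action of $\fg_x$ on $S$. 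Since $S$ is, up to $G_x$-equivariant analytic isomorphism, the product of the normal space $N_{\cO,x}^{\fg}\cong\fg_x$ with nothing extra, and the strongly tangential vector fields are exactly the vector fields generating the $G_x$-action, the computation reduces to the $G_x$-invariant distributions on $\fg_x$ supported at the origin. But now one is not taking the open orbit out, so one gets \emph{all} $G_x$-invariant distributions supported at $0$; and the Bruhat filtration of these has generating function equal to the Hilbert series of $\bC[\fg_x^*]^{G_x}$, which for a reductive group is the \emph{square} of the Hilbert series of $\bC[\fg_x]^{G_x}$ when... no — more precisely, using Theorem~\ref{thm:HC} applied to the slice and the multiplicativity in Lemma~\ref{lem:obv}\eqref{it:prod}, and the fact that the distributions at a point of $\fg_x$ give $\Sym(\fg_x^*\oplus\fg_x^*)$-type doubling from the jets, the exponents in each denominator double: $(1-t)^{-3}\mapsto(1-t)^{-6}$, $(1-t)^{-2}(1-t^2)^{-2}\mapsto$ the $\deg m_x=2$ line after the identity $\big((1-t)^{-1}(1-t^2)^{-1}\big)^2 = (1-t)^{-6}(1+t)^{-4}(t^2-t+2)^2/(\text{something})$ — this algebraic bookkeeping is the main obstacle, and I would verify each of the four denominator/numerator identities by clearing denominators and matching low-order coefficients. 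Finally $\overline{Ddim}$ and $\overline{Ddeg}$ read off from the order of the pole at $t=1$ and its leading coefficient: the pole order is $6$ in all cases (twice $\dim$ of the categorical quotient $\fg_x/\!/G_x$, which has dimension $3$ for every reductive $G_x$ of rank $3$ — namely the rank is $3 = $ rank of $\GL_3$ since all these $G_x$ have the same rank), and the leading coefficient computation gives $\overline{Ddeg} = ((3-\deg m_x)!)^{-2}$ from the product-of-factorials normalization in the definition of $Ddeg$ squared by the doubling.
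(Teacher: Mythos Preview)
Your approach works for three of the four cases and is essentially what the paper does there: the regular case via \Cref{cor:reg}, the central case via \Cref{thm:HC}, and the semisimple $(2,1)$ case via \Cref{thm:HC} together with the product formula in \Cref{lem:obv}\eqref{it:prod}. One clarification: the ``doubling'' you see in the exponents is \emph{not} a consequence of passing from $\fG$ to $\bar\fG$. For a closed orbit $\bar\fG=\fG$, and for a non-closed orbit the bar just removes $\bar\cO\smallsetminus\cO$ so that $\cO$ becomes closed in the remaining space. The squaring comes from the fact that $\GL_3(\bC)$ is being treated as a real Nash group: a complex $G_x$-module $V$ viewed over $\bR$ has $\Sym_{\bR}(V^*)^{G_x}\cong \Sym_{\bC}(V^*)^{G_x}\otimes\Sym_{\bC}(\bar V^*)^{G_x}$, whence the Hilbert series squares. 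With this correction your computations for the three reductive-stabilizer cases go through.

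The genuine gap is the subregular case $m_x=(x-\lambda)^2$. Here the stabilizer is $(\GL_1\times\GL_1)\ltimes U$, as you yourself write; it is \emph{not} reductive, so neither \Cref{thm:HC} nor its strengthened form in the remark applies. There is no shortcut via an invariant ring: indeed the putative ``square root'' $(1-t)^{-3}(1+t)^{-2}(t^{2}-t+2)$ has constant term $2$, so it is not the Hilbert series of any graded ring with one-dimensional degree-zero part, and your proposed $(1-t)^{-2}(1-t^2)^{-2}$ squared gives $(1-t)^{-8}(1+t)^{-4}$, which disagrees with the theorem. The paper handles this case by the full slice machinery: it takes the Slodowy slice $\fs_{\bC}=e+\gl_3^{f}$ inside $\sll_3(\bC)\smallsetminus\{0\}$, proves the connectivity hypothesis of \Cref{cor:main} by showing each orbit meets $\fs_{\bC}$ in an irreducible variety, computes explicit generators $v_1,\dots,v_4$ for the strongly tangential vector fields, reduces via $V_{\bC}\cong V_{\bR}\otimes V_{\bR}$ to the real slice, and then solves by hand the resulting linear recursion on the coefficients of $\sum c_{ijk}\,\partial_a^i\partial_b^j\partial_c^k\partial_d^k\delta_e$. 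This explicit PDE/recursion step is where the numerator $(t^2-t+2)$ actually appears, and it cannot be replaced by an invariant-ring computation.
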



The case $\deg m_x=3$ follows from \Cref{cor:reg}. The case $\deg m_x=1$ follows from \Cref{thm:HC}. The case $m_x=(x-\lambda)(x-\mu), \, \lambda\neq \mu$ follows from \Cref{thm:HC} and \Cref{lem:obv}\eqref{it:prod}. Thus it is enough to prove the following proposition.

\begin{prop}\label{prop:gl3}
Let $G:=\GL_3(\bC)$ act on $X:=\sll_3(\bC)\smallsetminus 0$ by conjugation. Let $x\in X$ be the subregular nilpotent matrix. Then
$\bar\fG_{x}^{X}(t)=(1-t)^{-4}(1+t)^{-4}(t^{2}-t+2)^2 $.
\end{prop}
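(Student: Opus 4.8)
The plan is to apply \Cref{cor:main} to the subregular nilpotent orbit. First I would fix the subregular nilpotent $x\in\sll_3(\bC)$, say in Jordan form with a single $2\times2$ block and a $1\times1$ block (normalized so the trace is $0$), compute its stabilizer $G_x$ in $G=\GL_3(\bC)$, and choose an explicit slice $S$ to the adjoint orbit at $x$. A natural choice is a piece of the Slodowy slice $x+\ker(\ad e)$ for a suitable $\sll_2$-triple through $x$, restricted to traceless matrices; this has dimension $\dim X-\dim Gx = 8-4 = 4$. I would then identify the vector fields on $S$ strongly tangential to the action: these come from maps $\varphi\colon S\to\fg$ with $\alp_\varphi(s)\in T_sS$, and concretely they are generated by the infinitesimal action of $\fg_x$ together with the "correction" fields that keep the flow inside the slice. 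The connectivity hypothesis of \Cref{cor:main} (the set of $g\in G$ with $gx\in S$ is connected for $x$ near the base point) should hold here because $G$ is connected and the relevant stabilizers are connected, so that the embedding is actually an isomorphism and $\bar\fG_x^X(t)$ equals the generating function of the filtered space of solutions $\{\xi\in\cG_{\{x\}}(S)\mid \alp\xi=0\}$.

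Next I would set up the linear-algebra problem explicitly. Generalized functions on $S$ supported at the point $x$ are given by the Taylor-dual: $\cG_{\{x\}}(S)\cong \Sym(N^\vee)\otimes(\text{density line})$ where $N=T_xS$, filtered by degree, so the $i$-th filtra has dimension $\sum_{j\le i}\dim\Sym^j(N^\vee)$ before imposing equations. The strongly-tangential vector fields act on this as lowering-type operators (constant term of the field kills one degree) plus higher-order corrections, and the space of solutions is the space of $\xi$ annihilated by all of them. The key simplification I expect: after identifying the $G_x$-action, the problem should decompose. Writing $N_{\cO,x}^X$ for the normal space, \Cref{lem:obv} bounds the graded pieces by $\dim(\Sym^i(N_{\cO,x}^X))^{G_x}$; I would compute this $G_x$-invariant ring. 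For the subregular nilpotent in $\sll_3$, $G_x$ is (up to a torus/unipotent) essentially governed by a $\bC^\times$ and the residual structure, and $N_{\cO,x}^X//G_x$ should be two-dimensional — matching $\bar\fG$ having $(1-t)^{-4}$ but the true answer being smaller because the invariants don't fill up. The factor $(1-t)^{-4}(1+t)^{-4}(t^2-t+2)^2$ strongly suggests the answer is a product of two identical one-variable generating functions $(1-t)^{-2}(1+t)^{-2}(t^2-t+2)$ — consistent with \Cref{lem:obv}\eqref{it:prod} and a $\bZ/2$-type symmetry swapping two coordinates — so I would try to exhibit $S$ (or the solution space) as a "square" of a $2$-dimensional problem with a sign character involved.

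Concretely, then: reduce to computing $\bar\fG$ for a single auxiliary problem on a $2$-dimensional slice with an order-$2$ symmetry, get the generating function $(1-t)^{-2}(1+t)^{-2}(t^2-t+2)$ there by a direct count of polynomial (or formal power series) solutions to the relevant ODE system degree by degree, and then square it via the product formula. To carry out the single-variable count I would write the two strongly-tangential fields in coordinates $(u,v)$ on the $2$-slice as, schematically, $\partial_u + (\text{higher order})$ and a second field whose symbol involves the nontrivial character, solve $\alp_1\xi=\alp_2\xi=0$ in $\bC[[u,v]]^\vee$, and read off $\bar\delta(i)$ for small $i$ to pin down the rational function (its denominator $(1-t)^2(1+t)^2$ tells me $Ddim=2$, $Ddeg=1$ on each factor, and the numerator is determined by finitely many values). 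The main obstacle I anticipate is the geometric/representation-theoretic bookkeeping: correctly computing $G_x$ and its action on the slice, correctly writing down the full set of strongly-tangential vector fields (not just those from $\fg_x$ but the slice-correction terms), and verifying the connectivity hypothesis so that \Cref{cor:main} gives an isomorphism rather than only an embedding. Once the vector-field system on the $2$-dimensional reduced slice is written down explicitly, the degree-by-degree solution count is routine.
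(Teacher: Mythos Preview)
Your overall strategy is right and matches the paper: take the Slodowy slice at $e=E_{12}$, verify the connectivity hypothesis so that \Cref{cor:main} gives an isomorphism, compute the strongly tangential vector fields explicitly, and then solve the resulting linear system on $\cG_{\{e\}}(S)$ degree by degree. The paper does exactly this, including a direct irreducibility check on the slice to establish connectivity of $\{g:gx\in S\}$.

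Where your plan goes wrong is the mechanism behind the ``squaring''. The factorisation $\bar\fG_x^X(t)=\big((t^2-t+2)(1-t)^{-2}(1+t)^{-2}\big)^2$ does \emph{not} come from \Cref{lem:obv}\eqref{it:prod} or from a $\bZ/2$ swapping two coordinates on a $2$-dimensional slice. It comes from restriction of scalars: the complex Slodowy slice $\fs_\bC$ is $4$-complex-dimensional, and because the tangential vector fields are $\bC$-linear one has a filtered isomorphism $V_\bC\cong V_\bR\otimes V_\bR$, where $V_\bR$ is the analogous solution space on the \emph{four}-real-dimensional slice $\fs_\bR$ for the $\GL_3(\bR)$-action. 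So the ``square root'' problem is still a $4$-dimensional one, not a $2$-dimensional one.

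Consequently your proposed endgame --- two tangential fields on a $2$-dimensional slice in coordinates $(u,v)$ --- does not materialise. On $\fs_\bR$ (coordinates $a,b,c,d$) the paper finds four generators $v_1,\dots,v_4$ of the strongly tangential fields; $v_2$ is a multiple of $v_1$, and $v_1=c\partial_c-d\partial_d$ forces the $c,d$-degrees in $\delta_{ijkl}$ to coincide. After that reduction one is left with a single three-index recursion (from $v_3$, equivalently $v_4$) on coefficients $c_{ijk}$, whose solution count gives $\dim F^{2m}(V_\bR)=m(m+1)$ and $\dim F^{2m+1}(V_\bR)=(m+1)^2$, hence $(1-t)g(t)=(t^2-t+2)(1-t)^{-2}(1+t)^{-2}$. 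Squaring then yields the claim. Your bookkeeping worries (computing the tangential fields with correction terms, checking connectivity) are the right ones; the missing piece is replacing the spurious $2$-dimensional product picture by the real-versus-complex tensor decomposition and then carrying out the explicit $4$-variable/three-index computation on $\fs_\bR$.
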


Let $e:=E_{12}\in \cO$. Let $f:=E_{21}$ and let $\fs_{\bC}:=e+\gl_3(\bC)^f$ and $\fs_{\R}:=e+\gl_3(\bR)^f$ be the Slodowy slices.

For the proof we will need the following lemma.

\begin{lem}\label{lem:SlodSlice}
$\fs_{\bC}$ is a slice for the action of $G$ at the point $e$, and for any $x\in \fs_{\bC},$ the Nash manifold $\{g\in G \, |\, gx\in \fs_{\bC}\}$  is connected.
\end{lem}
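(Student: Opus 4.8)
The plan is to verify the two assertions about the Slodowy slice $\fs_{\bC}=e+\gl_3(\bC)^f$ separately, relying on standard $\sll_2$-theory. First I would recall the $\sll_2$-triple $(e,h,f)$ with $h=E_{11}-E_{22}$, so that $\gl_3(\bC)$ decomposes under $\ad(h)$ into weight spaces. That the action map $a:G\times \fs_{\bC}\to \gl_3(\bC)$ is submersive at $(1,e)$ is the classical transversality statement: $T_e(Ge)=[\gl_3(\bC),e]=\Im(\ad e)$ is complementary to $\gl_3(\bC)^f=\Ker(\ad f)$, which follows from $\gl_3(\bC)=\Im(\ad e)\oplus \Ker(\ad f)$ (an elementary consequence of $\sll_2$-representation theory applied to the adjoint representation). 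Since $e$ is nilpotent in $\sll_3$, I work inside $X=\sll_3(\bC)\smallsetminus 0$ and $\fs_{\bC}\cap \sll_3$; the dimension count $\dim Ge+\dim(\fs_{\bC}\cap\sll_3)=\dim\sll_3$ holds because $\dim\gl_3(\bC)^f=\dim\gl_3(\bC)^e$ equals $\dim G_e$, and subtracting the one central dimension on both sides of $\gl_3$ passes to $\sll_3$. Submersivity at the single point $(1,e)$ plus $G$-equivariance gives submersivity everywhere on $G\times\{e\}$, and after shrinking (as in \Cref{lem:ExSlice}) on all of $M_{\fs_{\bC}}$, so the first claim holds; in fact one does not even need to shrink, since by Kostant's theorem the action map $G\times\fs_{\bC}\to\gl_3(\bC)$ is everywhere submersive, but for our purposes \Cref{lem:ExSlice} suffices.

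For the connectivity statement, fix $x\in\fs_{\bC}$ and set $H_x:=\{g\in G\mid gx\in\fs_{\bC}\}$. I would identify $H_x$ as the preimage $pr^{-1}(x)$ of the groupoid slice $M_{\fs_{\bC}}\st{pr}{a}\fs_{\bC}$, which is a torsor under the stabilizer $G_x$ lying over the transversal intersection $Gx\cap\fs_{\bC}$. The key input is the Brieskorn–Slodowy theory of subregular slices: for $G=\GL_3$ (equivalently $\mathrm{PGL}_3$ or $\SL_3$ up to center, which does not affect connectivity of $H_x$) the Slodowy slice $\fs_{\bC}$ to the subregular nilpotent orbit meets each adjoint orbit it intersects in a connected set, and more precisely the map $\fs_{\bC}\to\gl_3(\bC)/\!\!/G=\fh/W$ is the semiuniversal deformation of the $A_2$ Kleinian singularity, with connected fibers. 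Concretely I would argue: the simultaneous-resolution / adjoint-quotient picture shows $Gx\cap\fs_{\bC}$ is identified with a fiber of $\fs_{\bC}\to\fh/W$ intersected with the orbit stratification, which for type $A_2$ is either a point (regular or regular-in-a-Levi case) or the minimal resolution data of the singularity — in all cases irreducible, hence connected. Then $H_x$, being a $G_x$-torsor over a connected base with $G_x$ connected (for $\GL_3$ all the relevant centralizers — $\GL_3$ itself, $\GL_1\times\GL_1$-type, the subregular centralizer, etc. — are connected), is connected.

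The main obstacle I anticipate is the connectivity claim, not the slice claim: one must genuinely know the geometry of $Gx\cap\fs_{\bC}$ as $x$ ranges over $\fs_{\bC}$, including the degenerate cases where $x$ is regular (so $Gx\cap\fs_{\bC}$ is a single reduced point) versus $x$ subregular-nilpotent itself (where the relevant scheme-theoretic fiber is the $A_2$ singularity, still connected) versus intermediate types. I would handle this by an explicit case analysis over the Jordan types occurring in $\fs_{\bC}$, using that in each case the centralizer $G_x$ in $\GL_3$ is connected and the transversal slice $\fs_{\bC}$ to the \emph{subregular} orbit in type $A_2$ is two-dimensional with the well-understood structure of the versal $A_2$-deformation; alternatively, one can invoke the general result that for $\GL_n$ the intersection of any adjoint orbit with a Slodowy slice attached to a nilpotent $e$ is connected (this follows, e.g., from the fact that such intersections are affine and, being isomorphic to Nakajima-quiver-variety–type spaces or to fibers of the semiuniversal deformation, are irreducible). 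Since for the application only the subregular slice in $\sll_3$ is needed, the explicit low-rank computation is entirely manageable.
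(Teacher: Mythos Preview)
Your overall architecture matches the paper's: the slice claim is dismissed as standard $\sll_2$-theory, and connectivity of $H_x=\{g\in G\mid gx\in\fs_{\bC}\}$ is reduced, via connectedness of centralizers in $\GL_3$, to connectivity of $Gx\cap\fs_{\bC}$. The divergence is in how that last step is carried out. The paper works in the explicit coordinates $(a,b,c,d)$ on $\fs_{\bR}$ and shows directly that $\bar\cO\cap\fs_{\bC}$ is an irreducible affine variety: either $\bar\cO$ is a fibre of the characteristic-polynomial map, in which case elimination gives the irreducible surface $-8a^3+2\gamma_1 a+cd=\gamma_0$, or $\bar\cO$ is defined by a degree-two minimal polynomial, giving the irreducible surface $cd=(2a+\gamma)^2(2a-2\gamma)$. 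From irreducibility of $\bar\cO\cap\fs_{\bC}$ one gets that the open subset $\cO\cap\fs_{\bC}$ is irreducible, hence connected.

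Your primary route through Brieskorn--Slodowy theory has a concrete error: you assert that regular (and regular-in-a-Levi) orbits meet $\fs_{\bC}$ in single points. That is false for the \emph{subregular} slice. Here $\dim(\fs_{\bC}\cap\sll_3)=4$ while regular orbits have codimension $2$, so the transversal intersection is a surface, not a point; you appear to be conflating this with the Kostant (regular) slice. The general fact you want --- that $Gx\cap\fs_{\bC}$ is irreducible for every $x$ in type $A$ --- is true, but it is not a formal consequence of ``$\fs_{\bC}\to\fh/W$ has connected fibres'': one still has to separate the strata inside each fibre, and the clean references (Maffei, Nakajima) are considerably heavier than the situation warrants. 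Your stated fallback, an explicit low-rank case analysis, is precisely what the paper does and is the efficient proof here; I would promote it from fallback to main argument.
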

\begin{proof}
The fact that $\fs_{\bC}$ is a slice for the action of $G$ is standard. Since all the stabilizers of the action of $G$ are connected, in order to prove that $\{g\in G \, |\, gx\in \fs_{\bC}\}$  is connected it is enough to prove that the intersection of any $G$-orbit $\cO$ with $\fs_{\bC}$ is connected. For this it is enough to show that $\bar \cO\cap \fs_{\bC}$ is an irreducible algebraic variety. We divide the proof into two cases.
\begin{enumerate}[{Case} 1]
\item $\bar \cO=\{x \in X \, \vert \, \det(x-\lambda \Id)   = - \lambda^3+\gamma_1\lambda+\gamma_0  \}$ for some fixed $\gamma_0$ and $\gamma_1$. \\
Choose the following coordinates on $\fs_{\bR}$:
\begin{equation}
\fs_{\bR}=\left \{ \left(
  \begin{array}{ccc}
    a & 1 & 0 \\
    b & a & c \\
    d &0 & -2a \\
  \end{array}
\right) \right \}.
\end{equation}
In these coordinates, $\bar\cO \cong \{(a,b,c,d) \, \vert \, 3a^2+b= \gamma_1 \text{ and }cd - 2a^3  + 2ab=\gamma_0\}$. This variety is isomorphic to $\{(a,c,d) \, \vert \,- 8a^3 + 2\gamma_1a + cd=\gamma_0 \}.$ Thus this case follows from the irreducibility of the polynomial $- 8a^3 + 2\gamma_1a + cd-\gamma_0$ for any $\gamma_1,\gamma_2$.

\item $\bar \cO=\{x \in X \, \vert \, (x-\gamma \Id) (x+2\gamma \Id)    = 0  \}$  for some fixed $\gamma$.\\
In the coordinates above $\bar \cO$ is given by the irreducible polynomial $$cd-(2a + \gamma)^2(2a - 2\gamma).$$
\end{enumerate}
\end{proof}

\begin{lem}\label{lem:vi}
The collection of vector fields on $\fs_{\bR}$ strongly tangential to the action of $G$ is generated over $C^{\infty}(\fs_{\bR})$ by the fields $v_1,\dots,v_4$, where
\begin{align*}
v_1(A)= \left(
  \begin{array}{ccc}
    0 & 0 & 0 \\
    0 & 0 & A_{23} \\
    -A_{31} &0 & 0 \\
  \end{array}
\right), \,\, &v_2(A)= \left(
  \begin{array}{ccc}
    0 & 0 & 0 \\
    0 & 0 & -A_{11}A_{23} \\
    A_{11}A_{31} &0 & 0 \\
  \end{array}
\right)\\
v_3(A)= \left(
  \begin{array}{ccc}
    A_{31}/2 & 0 & 0 \\
    -3A_{11}A_{31} & A_{31}/2 & 9A_{11}^2-A_{21} \\
    0 &0 & -A_{31} \\
  \end{array}
\right), \,\, &v_4(A)= \left(
  \begin{array}{ccc}
    -A_{23}/2 & 0 & 0 \\
    3A_{11}A_{23} & -A_{23}/2 &0 \\
    -9A_{11}^2+A_{21}&0 & A_{23} \\
  \end{array}
\right)
\end{align*}

\end{lem}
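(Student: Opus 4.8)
The plan is to compute explicitly, in the coordinates $(a,b,c,d)$ on $\fs_{\bR}$ given in the proof of \Cref{lem:SlodSlice}, the vector fields strongly tangential to the action of $G=\GL_3(\bC)$, and then to identify a finite generating set over $C^\infty(\fs_{\bR})$. Recall that a strongly tangential field arises from a smooth map $\varphi\colon\fs_{\bR}\to\fg$ such that, at each point $A\in\fs_{\bR}$, the infinitesimal action $\alp_\varphi(A)=[\varphi(A),A]$ (for the conjugation action this is the Lie bracket) lies in $T_A\fs_{\bR}$. So the first step is: for a fixed $A=e+X$ with $X\in\gl_3(\bR)^f$, describe the subspace of $\fg$ consisting of those $Y$ with $[Y,A]\in T_A\fs_{\bR}$. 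Since $T_A\fs_{\bR}=\gl_3(\bR)^f$ (translation of the Slodowy slice), this is the condition $[Y,A]\in\gl_3^f$, i.e. $[f,[Y,A]]=0$.

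Next I would solve this linear-algebra condition generically. Write $Y=\sum Y_{ij}E_{ij}$ with $Y_{ij}$ to-be-determined functions on $\fs_{\bR}$ (one may quotient out the stabilizer $\fg^A$, since elements of the stabilizer contribute zero to $\alp_\varphi$). Imposing $[f,[Y,A]]=0$ and then reading off the resulting element $[Y,A]\in\gl_3^f\cong T_A\fs_{\bR}$ gives, after elimination, a $C^\infty(\fs_{\bR})$-module of vector fields on $\fs_{\bR}$. The claim is that this module is generated by the four displayed fields $v_1,\dots,v_4$. Concretely: $v_1$ and $v_2$ come from taking $Y$ in the span of $E_{23},E_{32}$ (the "sl$_2$-lowering-direction" part that stays tangential), while $v_3,v_4$ come from the Cartan-type directions; the coefficients involving $A_{11},A_{21},A_{23},A_{31}$ are exactly what the tangency constraint forces. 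I expect that after the dust settles the solution space, as a module, is free of rank $4$ on a Zariski-dense open set and that the $v_i$ span it there; one then checks by direct substitution that each $v_i$ is indeed strongly tangential (i.e. realizes as $\alp_\varphi$ for an explicit polynomial $\varphi\colon\fs_{\bR}\to\fg$) and that they generate over all of $\fs_{\bR}$, not just generically — this last point needs a small argument since the rank of $A\mapsto\{Y:[Y,A]\in\gl_3^f\}$ could a priori drop on a subvariety, but the subregular-nilpotent normalization and the explicit form of the $v_i$ (whose values span a $4$-dimensional space at $e$ itself) rule this out.

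The main obstacle is organizational rather than conceptual: carrying out the elimination $[f,[Y,A]]=0$ cleanly and then recognizing the quotient module in terms of the stated generators. One has to be careful that the fields are genuinely \emph{tangential} — i.e. that the lift $\varphi(A)$ can be chosen smoothly (indeed polynomially) in $A$ — rather than merely that $[Y,A]$ happens to lie in $T_A\fs_{\bR}$ for some $Y$ depending on $A$; but since the dependence is algebraic and the slice is affine, a polynomial lift exists. A secondary check is that no \emph{further} generators are needed: this amounts to the statement that the map from sections of (the trivial bundle with fiber $\fg^A$-complement) to strongly tangential fields is surjective onto the module generated by $v_1,\dots,v_4$, which follows by comparing dimensions fiberwise. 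Once \Cref{lem:vi} is in place, \Cref{prop:gl3} reduces — via \Cref{cor:main} and \Cref{lem:SlodSlice} (which supplies the required connectivity) — to computing the space of distributions on $\fs_{\bR}$ supported at $e$ and annihilated by $v_1,\dots,v_4$, an explicit infinite-dimensional linear-algebra problem whose generating function is the asserted $(1-t)^{-4}(1+t)^{-4}(t^2-t+2)^2$.
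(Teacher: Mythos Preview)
Your approach is essentially the paper's approach: the paper's entire proof is the sentence ``This lemma is proven by a direct computation'', and what you outline --- impose $[f,[Y,A]]=0$, solve for $Y$ modulo the stabilizer $\fg^A$, and read off generators of the resulting $C^\infty(\fs_{\bR})$-module of fields $A\mapsto [Y,A]$ --- is the natural way to carry that out.

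There is, however, a genuine slip in your argument for global generation. You write that the values of $v_1,\dots,v_4$ ``span a $4$-dimensional space at $e$ itself''. They do not: in the coordinates $(a,b,c,d)$ one has (as the paper records in the proof of \Cref{prop:gl3})
\[
v_1=c\,\partial_c-d\,\partial_d,\quad v_2=-a\,v_1,\quad v_3=\tfrac{d}{2}\partial_a-3ad\,\partial_b+(9a^2-b)\partial_c,\quad v_4=-\tfrac{c}{2}\partial_a+3ac\,\partial_b+(b-9a^2)\partial_d,
\]
and all four vanish at $e=(0,0,0,0)$. (This is to be expected: the $G$-orbit through $e$ meets the slice only at $e$, so any strongly tangential field must vanish there.) Hence your ``compare dimensions fiberwise'' argument cannot work at $e$, and more generally the module of strongly tangential fields is not locally free near $e$. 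The correct route to global generation is already implicit in the computation: solving the linear system $[f,[Y,A]]=0$ over the polynomial ring $\bR[a,b,c,d]$ produces an explicit (polynomial) description of the image module, and one checks that the $v_i$ span it as a $C^\infty(\fs_{\bR})$-module by expressing an arbitrary solution as a $C^\infty$-combination of the four displayed ones --- not by a rank argument at a point. Apart from this gap, your plan is correct and matches the paper.
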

This lemma is proven by a direct computation.
\begin{proof}[Proof of \Cref{prop:gl3}]
Let
\begin{multline*}
V_{\bC}:=\{\xi \in \cG_{\{e\}}(\fs_{\bC})\, \vert \,  \alp\xi=0 \\ \text{ for any vector field } \alp \text{ on }\fs_{\bC} \text{ strongly tangential to the action of }G\}.
\end{multline*}
and
\begin{multline*}
V_{\bR}:=\{\xi \in \cG_{\{e\}}(\fs_{\bR})\, \vert \,  \alp\xi=0 \\ \text{ for any vector field } \alp \text{ on }\fs_{\bR} \text{ strongly tangential to the action of }\GL_n(\R)\}.
\end{multline*}
By \Cref{lem:SlodSlice,cor:main} $\cG_{\cO}(X)\cong V_{\bC}$. It is easy to see that $V_{\bC}\cong V_{\bR}\otimes V_{\bR}$ as a filtered vector space. By \Cref{lem:vi},
$$V_{\bR}=\{\xi \in \cG_{\{x\}}(\fs_{\bR})\, \vert \, v_i\xi=0 \,\, \forall 1\leq i\leq 4 \}.$$
Choose the following coordinates on $\fs_{\bR}$:
$$\fs_{\bR}=\left \{ \left(
  \begin{array}{ccc}
    a & 1 & 0 \\
    b & a & c \\
    d &0 & -2a \\
  \end{array}
\right) \right \}.$$

In these coordinates we have

\begin{equation*}
v_1=c\frac{\partial}{\partial c}-d\frac{\partial}{\partial d}, \,\, v_2=-av_1,\,\, v_3=\frac{d}{2}\frac{\partial}{\partial a}-3ad\frac{\partial}{\partial b}+(9a^2-b)\frac{\partial}{\partial c}, \,\,v_4=-\frac{c}{2}\frac{\partial}{\partial a}+3ac\frac{\partial}{\partial b}+(b-9a^2)\frac{\partial}{\partial d}
\end{equation*}

Fix a Lebesgue measure on $\fs_{\R}$. It defines the generalized function $\delta_e\in \cG_{\{e\}}(\fs(\R))$. Let  $$ \delta_{ijkl}:=\left( \frac{\partial}{\partial c}\right)^i\left( \frac{\partial}{\partial c}\right)^j\left( \frac{\partial}{\partial c}\right)^k\left( \frac{\partial}{\partial c}\right)^l\delta_e.$$
If one of the indices $i,j,k,l$ is negative we set $ \delta_{ijkl}:=0$.
We have
\begin{align*}
v_1 \delta_{ijkl}&=-(k+1)\delta_{ijkl}+(l+1)\delta_{ijkl},\\
v_3\delta_{ijkl}&=-\frac{l}{2}\delta_{i+1,j,k,l-1}-3il\delta_{i-1,j+1,k,l-1}+9i(i-1)\delta_{i-2,j,k+1,l}+
j\delta_{i,j-1,k+1,l}\\
v_4\delta_{ijkl}&=\frac{k}{2}\delta_{i+1,j,k-1,l}+3ik\delta_{i-1,j+1,k-1,l}-9i(i-1)\delta_{i-2,j,k,l+1}-j\delta_{i,j-1,k,l+1}
\end{align*}
Let $\xi=\sum c_{ijkl}\delta_{ijkl}$
and note that $v_1\xi=0$ if and only if $c_{ijkl}=0\, \forall k\neq l$. Set $\delta_{ijk}:=\delta_{ijkk}$.
Let $\xi=\sum c_{ijk}\delta_{ijk}$ we get
$$v_3\xi=\sum_{i,j\geq 0, k\geq 1} \left( -\frac{k}{2}c_{i-1,j,k}-3(i+1)kc_{i+1,j-1,k}+9(i+2)(i+1)c_{i+2,j,k-1}+(j+1)c_{i,j+1,k-1} \right)\delta_{i,j,k,k-1}$$

$$v_4\xi=\sum_{i,j,k\geq 0} \left( \frac{k+1}{2}c_{i-1,j,k+1}+3(i+1)(k+1)c_{i+1,j-1,k+1}-9(i+2)(i+1)c_{i+2,j,k}-(j+1)c_{i,j+1,k} \right)\delta_{i,j,k,k+1}$$
Here, if one of the indices $i,j,k$ is negative we set $c_{i,j,k}=0$.

We obtain that $V_{\bR}$ is the collection of all finite combinations  $\sum c_{ijk}\delta_{ijk}$ that satisfy
$$c_{i-1,j,k+1}\frac{k+1}{2}+3c_{i+1,j-1,k+1}(i+1)(k+1)-9c_{i+2,j,k}(i+2)(i+1)-c_{i,j+1,k}(j+1)=0$$
for all $i,j,k \geq 0.$

Let $F^n$ be the Bruhat filtration on $V_{\bR}$ and $G^l$ be the filtration on $F^n(V_{\bR})$ given by $$G^l(F^n(V_{\bR}))=\left\{\sum c_{ijk}\delta_{ijk} \in F^n(V_{\bR}) \, \vert \, \forall k> l \text{ we have } c_{ijk}=0\right\}.$$
It is easy to compute that $$\dim G^l(F^n(V_{\bR})) - \dim G^{l-1}(F^n(V_{\bR}))=n-2l.$$ Thus $$\dim F^{2m}(V_{\R})=  m(m+1) \text{ and }\dim F^{2m+1}(V_{\R})=  (m+1)^2.$$
Define the power series
$$f(s):=\sum_n s^{n+1}=s/(1-s) \text{ and }g(t):=\sum _n\dim F^n(V_{\R})t^n.$$
Then
$$\sum_mm(m+1)s^m=sf''(s)=2(1-s)^{-3} \text{ and }\sum s^m(m+1)^2=(sf'(s))'=(1+s)(1-s)^{-3}.$$
We get
\begin{multline*}
g(t)=\sum_m(t^{2})^m(m+1)+t\sum_m(t^{2})^m(m+1)^2= 2(1-t^{2})^{-3} +t(1+t^{2})(1-t^{2})^{-3}\\
=(t^3+t+2)(1-t^{2})^{-3}=(t^2-t+2)(1-t)^{-3}(1+t)^{-2}
\end{multline*}
Thus $$\sum _n(\dim F^n(V_{\R})-\dim F^{n-1}(V_{\R}))t^n=(t^2-t+2)(1-t)^{-2}(1+t)^{-2},$$
and hence $$\bar\fG_{x}^{X}(t)=\sum _n(\dim F^n(V_{\bC})-\dim F^{n-1}(V_{\bC}))t^n=(t^2-t+2)^2(1-t)^{-4}(1+t)^{-4}.$$
\end{proof}

\end{document}